\numberwithin{equation}{section}   
\newtheorem{theorem}{Theorem}[section]
\newtheorem{lemma}[theorem]{Lemma}
\newtheorem{proposition}[theorem]{Proposition}
\theoremstyle{definition}
\newcommand*\bigcdot{\mathpalette\bigcdot@{.5}}
\newcommand*\bigcdot@[2]{\mathbin{\vcenter{\hbox{\scalebox{#2}{$\m@th#1\bullet$}}}}}
\newcommand{\R}{\mathbb{R}}
\newdimen\deltay
\def\Ddot#1#2(#3,#4,#5,#6){\deltay=#6\setbox1=\hbox to0pt{\smash{\dotcnt=1
\kern#3\loop\raise\dotcnt\deltay\hbox to0pt{\hss#2}\kern#5\ifnum\dotcnt<#1
\advance\dotcnt 1\repeat}\hss}\setbox2=\vtop{\box1}\ht2=#4\box2}
\pgfplotsset{compat = newest}
\def\author@andify{%
  \nxandlist {\unskip ,\penalty-1 \space\ignorespaces}%
    {\unskip {} \@@and~}%
    {\unskip \penalty-2 \space \@@and~}%
}
\title[Variation for Ornstein--Uhlenbeck]{
On the variation operator for the Ornstein--Uhlenbeck  semigroup
in dimension one}
\author{Valentina Casarino}
\address{DTG, Universit\`a degli Studi di Padova\\ Stradella san Nicola 3 \\I-36100 Vicenza \\ Italy}
\email{valentina.casarino@unipd.it}
\author{Paolo Ciatti}
\address{Dipartimento di Matematica "Tullio Levi Civita", Universit\`a degli Studi di Padova\\Via Trieste, 63, 35131 Padova,  \\ Italy}
\email{paolo.ciatti@unipd.it}
\author{Peter Sj\"ogren}
\address{Mathematical Sciences,  University of Gothenburg and  Mathematical Sciences,
Chalmers University of Technology  \\ SE - 412 96 G\"oteborg, Sweden}
\email{peters@chalmers.se}
\keywords{variation seminorm,
Ornstein--Uhlenbeck semigroup,
Mehler kernel}
\subjclass[2000]{42A99, 
47D03}
\thanks{
The first and second authors are members of the Gruppo Nazionale per l'Analisi Matematica, la Probabilità e le loro Applicazioni (GNAMPA)
of the Istituto Nazionale di Alta Matematica (INdAM) and were partially supported by GNAMPA (Project 2022
``Temi di Analisi Armonica Subellittica"). 
This research was carried out while the third author was GNAMPA Professore Visitatore at the University of Padova,  Italy. The third author also profited from a grant from the Adlerbert Research Foundation in Sweden.}
\date{\today}
\begin{document}

\begin{abstract}
Consider the variation seminorm of the Ornstein--Uhlenbeck semigroup $H_t$ in dimension one, taken with respect to $t$.  
We show that this seminorm defines an operator of weak type $(1,1)$ for the relevant Gaussian measure. The analogous $L^p$ estimates for $1<p<\infty$ were already known.
\end{abstract}

\maketitle

\section{Introduction}

In order to measure the fluctuations of  a  family of linear bounded operators $A_t:L^p(X)\to L^p(X)$, where $t > 0$ and  $X$ is a measure space,
it may be useful
to consider quantities involving many differences $T_t f(\cdot)-T_sf(\cdot)$, with $s,t>0$ and $f\in L^p(X)$.
Among  these quantities, variation and oscillation seminorms are probably  the best known.   
The corresponding variational
inequalities, stating that the $L^p$-norm of the variation or the oscillation of $(A_t f)_{t > 0}$ is uniformly bounded by the $L^p$-norm of $f$,
     have attracted increasing interest in the last fifty years.

In fact, in 1976
 D. Lépingle
 proved a first variational inequality for a family of bounded martingales   \cite{Lepingle}, also providing   a weak type $(1, 1)$ variant
 (see also \cite{MSZK} for  extensions and
 a different proof).
Then V. F. Gaposhkin in \cite{Gaposhkin1, Gaposhkin2}
considered oscillational inequalities for  standard ergodic averages.
Some years later, in 1989,
J. Bourgain proved  the pointwise convergence of ergodic averages along polynomial orbits
by replacing the classical estimates for  the Hardy--Littlewood maximal function  by  variational seminorm bounds \cite{Bourgain}.
Further results may be found 
   in \cite{Jones1-higher, Jones}.
   

After the seminal work by Bourgain, 
in light of the applications to  pointwise convergence phenomena,  the study of  variational inequalities
spread in many different contexts (for an updated survey, especially  from the point of view of oscillation estimates, we refer to \cite{MSW}).
The recent paper \cite{Mirek_boot}
deals with jump inequalities, seen as endpoint results for variation inequalities. 
Focusing on the field of harmonic analysis, we  recall here the cases of  the Hilbert transform \cite{Campbell}, Fejér and Poisson kernels \cite{Jones4},
  families of truncations of Gaussian Riesz transforms \cite{Harboure} and
 the heat and Poisson semigroups  of the Laplacian and Hermite operator \cite{Crescimbeni, Betancor1}.
 Analogous results have  been obtained  for
  semigroups associated with Fourier-Bessel expansions \cite{Betancor2}, for spherical means or averages along curves $(t, t^a)$ in the plane \cite{Jones2} and
for differential and singular integral operators  in some weighted Lebesgue spaces \cite{Ma1, Ma2}. 
Some results are also known for the  Ornstein--Uhlenbeck semigroup; see below.

To define the variation seminorm $v(\rho)$, let  $\phi$ be a real- or complex-valued function defined in an interval $I$. Then for  $1 \le \rho < \infty$
\begin{equation*}
  \|\phi\|_{v(\rho), I} := \sup \left( \sum_{i=1}^{n} |\phi(t_i) - \phi(t_{i-1})|^\rho \right)^{1/\rho},
\end{equation*}
where the supremum is taken over all finite, increasing sequences $\left(t_i \right)_0^n$ of points in $I$.
         This is a seminorm which vanishes only for constant functions. We will often omit indicating the interval $I$.
           The space $V(\rho,I)$ consists of those functions $\phi$ in $I$ for which $ \|\phi\|_{v(\rho),I} < \infty$. In this paper we will only consider the variation of continuous functions  $\phi(t)$.

We next introduce the one-dimensional Ornstein-Uhlenbeck semigroup.
Let $R(x) = x^2/2$ for $x \in \mathbb R$, and define the measure  $d\gamma_\infty(u) = (2\pi)^{-1/2} \, \exp(-R(u))\,du$ in $\mathbb R$.
The  semigroup is then given by
\begin{equation*}
  H_t f(x) =  \int f(u)\,K_t(x,u)\,d\gamma_\infty(u), \qquad t>0,
\end{equation*}
where $f \in L^1(\gamma_\infty)$ and the kernel $K_t$ is
\begin{equation}\label{def:Mehler}
K_t(x,u) = \frac {e^{R(x)}}{\sqrt{1-e^{-2t}}} \, \exp\left(-\frac12\,\frac{(e^{-t}u - x)^2}{1-e^{-2t}}\right), \qquad t>0, \quad (x,u)\in \mathbb R\times\mathbb R.
\end{equation}
The measure $\gamma_\infty$ is the unique probability measure which is invariant under the semigroup.

We will consider the variation of the semigroup, i.e., the seminorm  $\| H_t f(x)\|_{v(\rho),\Bbb R_+}$ taken with respect to $t$ and considered as an operator defined  for $f \in L^1(\gamma_\infty)$.

When  $\rho > 2$
it is known that this operator is bounded from $L^p(\gamma_\infty)$ to $L^p(\gamma_\infty)$ for
                 $1<p<\infty$, even for the Ornstein--Uhlenbeck semigroup in any finite dimension.  This follows from \cite{Jones1}, where a general symmetric diffusion semigroup  is considered.
              Another proof
 can be found in
  \cite[Corollary 4.5]{Le Merdy}; it is verified  in
 \cite[page 31]{Almeida}  that this corollary can be applied in our setting.

 The inspiration for the present work came from a comment in   \cite[page 31]{Almeida} saying that no variational weak type $(1,1)$ bound is known for the Ornstein--Uhlenbeck semigroup.         
We will prove the following one-dimensional result.

\begin{theorem}\label{thm}
 For each  $\rho > 2$ the operator that maps   $f \in L^1(\gamma_\infty)$ to the function
  \begin{equation}\label{norm-op}
  \| H_t f(x)\|_{v(\rho),\Bbb R_+}, \quad x \in \mathbb R,
\end{equation}
where the $v(\rho)$ seminorm is taken in the variable $t$, is of weak type $(1,1)$ with respect to the measure $\gamma_\infty$.
\end{theorem}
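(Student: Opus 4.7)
My strategy is a local--global splitting of the Mehler kernel, following the classical paradigm for weak type $(1,1)$ results in Gaussian harmonic analysis; the two pieces are then treated by techniques adapted to the variation operator. Set $K_t=K_t^{\mathrm{loc}}+K_t^{\mathrm{glob}}$, where $K_t^{\mathrm{loc}}$ is obtained by restricting $K_t$ to the local region $L=\{(x,u):|x-u|\le c\min(1,1/|x|)\}$ and $K_t^{\mathrm{glob}}$ to the complement. Since $\|\cdot\|_{v(\rho)}$ is a seminorm,
\[
\|H_tf(x)\|_{v(\rho)}\le \|H_t^{\mathrm{loc}}f(x)\|_{v(\rho)}+\|H_t^{\mathrm{glob}}f(x)\|_{v(\rho)},
\]
so it suffices to prove the weak type $(1,1)$ bound for each of the two resulting operators.

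For the local part the starting point is that the density of $K_t(x,u)\,d\gamma_\infty(u)$ with respect to Lebesgue measure on $\R$ equals
\[
p_t(x,u)=\frac{1}{\sqrt{2\pi(1-e^{-2t})}}\exp\!\Big(-\frac{(u-e^{-t}x)^2}{2(1-e^{-2t})}\Big),
\]
which, for $(x,u)\in L$, is comparable up to a bounded factor to the Euclidean heat kernel $G_{2t}(x-u)$. The plan is to show that the variation seminorm in $t$ of the difference $p_t(x,u)-G_{2t}(x-u)$, restricted to $L$, produces a kernel whose action on $|f|$ is pointwise dominated by the Hardy--Littlewood maximal function. Combined with the known weak type $(1,1)$ bound for the variation operator of the Euclidean heat semigroup from \cite{Crescimbeni}, and with the fact that $\gamma_\infty$ is locally doubling on $L$, this yields the weak type $(1,1)$ bound for $H_t^{\mathrm{loc}}$.

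For the global part I would exploit the elementary estimate $\|\phi\|_{v(\rho)}\le \int_0^\infty|\phi'(t)|\,dt$, valid for smooth $\phi$, to pass from the variation seminorm to an $L^1$-norm in $t$. This gives
\[
\|H_t^{\mathrm{glob}}f(x)\|_{v(\rho)}\le \int |f(u)|\,N(x,u)\,d\gamma_\infty(u),\qquad N(x,u):=\int_0^\infty\bigl|\partial_tK_t^{\mathrm{glob}}(x,u)\bigr|\,dt.
\]
The next step is to differentiate the Mehler kernel explicitly, identify the unique critical time $t^\ast(x,u)$ at which $\partial_tK_t(x,u)$ changes sign, and split the $t$-integration accordingly in order to extract a clean pointwise majorant for $N(x,u)$. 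One then verifies that the integral operator with this majorant is of weak type $(1,1)$ for $\gamma_\infty$ by Gaussian techniques of the type developed for the Ornstein--Uhlenbeck maximal operator.

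I expect the main obstacle to lie in the global analysis: producing a single pointwise majorant for $N(x,u)$ that is uniform in the short-time and long-time regimes demands a careful case analysis according to whether $x$ and $u$ have equal or opposite signs and to the relative size of $|x|$, $|u|$ and $|x-u|$. A further delicate point is the choice of the local region $L$, which has to be wide enough for the global estimate on $N$ to go through, yet narrow enough for the local comparison with the Euclidean heat kernel to remain valid.
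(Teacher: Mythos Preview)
Your local/global splitting and the use of $\|\phi\|_{v(\rho)}\le\int|\phi'|$ on the global piece match the paper's framework. There are, however, two points where your plan diverges from what the paper actually does.

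\textbf{Global part.} Your statement that $\partial_tK_t(x,u)$ has a \emph{unique} sign change is not correct. The paper computes
\[
\dot K_t(x,u)=K_t(x,u)\,\frac{P_{x,u}(e^{-t})}{(1-e^{-2t})^2},
\]
where $P_{x,u}$ is a polynomial of degree at most $4$; hence there can be up to four zeros. This is harmless---the paper simply uses the bound $\int_0^1|\dot K_t|\,dt\le 10\sup_{0<t\le1}K_t$ and then estimates the supremum on the global region by $e^{R(x)}(1+|x|)$. The paper also treats $t\ge 1$ separately (and more simply), getting $\int_1^\infty|\dot K_t|\,dt\lesssim e^{R(x)}$ directly; this split avoids having to produce a single majorant valid across both regimes, which you flag as a difficulty.

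\textbf{Local part.} Here your route and the paper's are genuinely different. You propose to compare $p_t$ to the Euclidean heat kernel $G_{2t}$ and invoke the Crescimbeni et al.\ weak $(1,1)$ result for the heat variation. The paper never makes this comparison. Instead it localizes to intervals $I_j$ of length $\simeq 1/(1+|x_j|)$, passes to Lebesgue measure, and then writes the Gaussian and the cutoff $\eta$ as superpositions of characteristic functions (a layer-cake formula). This reduces the problem to controlling the $v(\rho)$ seminorm of one-sided averages $M_\tau^\pm g(x)$, for which it appeals to the Jones--Kaufman--Rosenblatt--Wierdl theorem on the variation of ergodic averages. The remaining work is a careful monotonicity analysis of the coefficient factors $|J_t^\pm|/\sqrt{1-e^{-2t}}$, which is where the argument becomes genuinely one-dimensional (the paper remarks that this step resists extension to higher dimensions).

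Your comparison approach is natural and, if it goes through, would likely be more dimension-robust. But the step you label ``the plan is to show that the variation seminorm in $t$ of the difference $p_t-G_{2t}$ \ldots\ is pointwise dominated by the Hardy--Littlewood maximal function'' is the entire content of the local estimate, and it is not a known lemma. The drift term $u-e^{-t}x=(u-x)+x(1-e^{-t})$ contributes a piece of size roughly $|x|\,t^{-1}|u-x|\,p_t$ to $\partial_t p_t$ with no counterpart in $\partial_tG_{2t}$; integrating its absolute value in $t$ yields a kernel of order $|x|$ on the local strip, which only barely gives $Mf(x)$ after exploiting that the strip has width $1/(1+|x|)$. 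So the estimate is plausible but delicate, and you would need to carry it out in full rather than assert it.
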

In other words, the inequality
\begin{equation} \label{main-ineq}
\gamma_\infty
\{x\in\R :   \| H_t f(x)\|_{v(\rho),\Bbb R_+}
     > \alpha\} \le \frac{C}\alpha\,\|f\|_{L^1( \gamma_\infty)}, \qquad \alpha>0,
\end{equation}
 holds for some $C > 0$ and all functions $f\in L^1 (\gamma_\infty)$.

\bigskip

The structure of this paper is as follows. Section \ref{prelim} contains some preliminaries, mainly concerning the variation seminorm and the $t$ derivative of $K_t$. In the following sections, Theorem \ref{thm}  will be obtained as a direct consequence of Propositions \ref{t>1}, \ref{t<1,global} and \ref{locsmallt}. Of these, Proposition \ref{t>1}
 deals with the variation only for $t \ge 1$. For these values of $t$, the estimate \eqref{main-ineq} is slightly strengthened. In  Section~\ref{global}, we split the operator given by the variation for $0 < t \le 1$ into a local and a global part. This is done by means of a partition of the line into intervals where  the density of $\gamma_\infty$ is essentially constant. Then Proposition~\ref{t<1,global}, dealing with the global part, is  proved.

Proposition  \ref{locsmallt} is an estimate of the local part of the variation in  $0 < t \le 1$, and is stated and proved in the long
Section \ref{local1}. The proof goes via
 Proposition \ref{prp}, which  deals with one of the intervals of the partition at a time, and where $\gamma_\infty$ is replaced by Lebesgue measure.
 Finally,  Proposition \ref{prp} is seen to follow from estimates for the variation of integrals of an  $L^1$ function over certain intervals, obtained as a consequence of a known theorem about the variation of mean values.

 Theorem \ref{thm} is proved in the one-dimensional case, and
 the handling of integrals over intervals in  Section \ref{local1} just mentioned seems hard to extend to higher dimensions, because of geometrical obstructions.  Only the results of Sections \ref{t large} and  \ref{global} extend easily.

 We point out that in Sections \ref{t large} and  \ref{global}, we use arguments similar to some from the authors' papers \cite{CCS3} and \cite{CCS5}. Rather than invoking the results from there, we prefer to give the proofs explicitly.

\vskip35pt

\section{Preliminaries}\label{prelim}

By  $C < \infty$ and $c > 0$  we denote many different  absolute constants,   and $X \lesssim Y$, or equivalently  $Y \gtrsim X$, means $X \le C Y$.
We write  $X \simeq Y$  if both  $X \lesssim Y$  and $Y \lesssim X$.

Seminorms of type $\| . \|_{v(\rho),I}$ will always be taken in one of the variables $t$ or  $\tau$.

We will let  $\dot K_t(x,u) = \partial K_t(x,u)/\partial t$.

It is not immediately obvious that the function  $x \mapsto \| H_t f(x)\|_{v(\rho),\Bbb R_+}$     is measurable. But  $H_t f(x)$ is continuous in $t$ for each $x$, as seen by dominated convergence. In the definition of the ${v(\rho)}$ seminorm, it is therefore enough to consider  sequences of points $t_i \in \mathbb{Q}$, thus only a countable family of sequences. The measurability follows.

\vskip5pt

We   give some simple properties of the variation, and first observe that the seminorm
$\|.\|_{v(\rho)}$ is  decreasing in  $\rho$ for $1 \le \rho < \infty$.
This seminorm is  also subadditive in $I$, in the following sense. Take an inner point $\tau$ of $I$ and set $I_+  = I \cap [\tau, +\infty)$ and $I_-  = I \cap (-\infty, \tau]$. Then for  $1 \le \rho < \infty$ and any $\phi$
\begin{equation*}
   \|\phi \|_{v(\rho), I} \le \|\phi \|_{v(\rho), I_+} + \|\phi  \|_{v(\rho), I_-}.
 \end{equation*}

\begin{lemma} \label{var}
Let $1 \le \rho < \infty$.

 (a) If  $\phi \in C^1(I)$ and $\phi' \in L^1(I)$, then  $\phi \in V(\rho,I)$ and
 \begin{equation*}
   \|\phi  \|_{v(\rho), I} \le \int_{I} |\phi'(t)|\,dt.
 \end{equation*}

 (b)   If $\phi$ is monotone and bounded in $I$, then  $\phi \in V(\rho,I)$ and
\begin{equation*}
  \|\phi\|_{v(\rho), I} \le 2 \sup_I |\phi|.
\end{equation*}
\end{lemma}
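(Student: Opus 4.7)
The plan is to reduce both parts to the case $\rho=1$ and then exploit elementary properties of $\phi$.  The key observation is the monotonicity of $\|\cdot\|_{v(\rho)}$ in $\rho$ noted just above the lemma: since the embedding $\ell^{\rho_1}\hookrightarrow\ell^{\rho_2}$ gives $\bigl(\sum|a_i|^{\rho_2}\bigr)^{1/\rho_2}\le\bigl(\sum|a_i|^{\rho_1}\bigr)^{1/\rho_1}$ whenever $1\le \rho_1\le\rho_2$, it suffices in both parts to establish the stated bound for $\rho=1$, i.e.\ for the usual total variation.

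For part (a), I would fix an arbitrary finite increasing sequence $(t_i)_0^n\subset I$ and apply the fundamental theorem of calculus to each increment,
\begin{equation*}
|\phi(t_i)-\phi(t_{i-1})|=\left|\int_{t_{i-1}}^{t_i}\phi'(s)\,ds\right|\le\int_{t_{i-1}}^{t_i}|\phi'(s)|\,ds.
\end{equation*}
Summing in $i$ and using that the intervals $[t_{i-1},t_i]$ are disjoint inside $I$ gives $\sum_i|\phi(t_i)-\phi(t_{i-1})|\le \int_I|\phi'(s)|\,ds$; taking the supremum over partitions yields $\|\phi\|_{v(1),I}\le \int_I|\phi'|$, and then the monotonicity in $\rho$ provides the bound in $v(\rho)$.

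For part (b), if $\phi$ is monotone then all the differences $\phi(t_i)-\phi(t_{i-1})$ share a common sign, so the sum telescopes:
\begin{equation*}
\sum_{i=1}^n|\phi(t_i)-\phi(t_{i-1})|=\left|\sum_{i=1}^n\bigl(\phi(t_i)-\phi(t_{i-1})\bigr)\right|=|\phi(t_n)-\phi(t_0)|\le 2\sup_I|\phi|.
\end{equation*}
Taking the supremum over partitions gives $\|\phi\|_{v(1),I}\le 2\sup_I|\phi|$, and again monotonicity in $\rho$ finishes the proof.

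There is no real obstacle here: both assertions are standard facts about $\rho$-variation. The only minor points to be careful about are invoking the $\ell^p$-monotonicity correctly (so that one really can reduce to $\rho=1$) and, in part (a), making sure that the finite partitions stay inside $I$ so that the sum of integrals is bounded by the integral over all of $I$, which is finite by assumption.
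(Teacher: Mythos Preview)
Your argument is correct and follows exactly the approach indicated in the paper, which simply notes that both parts are easy for $\rho=1$ and then follow for all $\rho$ by the monotonicity of $\|\cdot\|_{v(\rho)}$ in $\rho$. You have merely supplied the (standard) details the paper omits.
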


Both parts here are easy for $\rho = 1$ and then follow for all $\rho$.

The variation of products can be estimated as follows.

\begin{lemma} \label{var_prod}
  Let $\phi$ and $\psi$ be bounded functions defined in the interval $I$. Then
  for any $1 \le \rho < \infty$
  \begin{equation*}
 \|\phi\psi  \|_{v(\rho)} \le   \|\phi  \|_\infty   \|\psi  \|_{v(\rho)}   +   \|\phi  \|_{v(\rho)}  \|\psi\|_\infty.
\end{equation*}
\end{lemma}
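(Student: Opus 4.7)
The plan is to derive the inequality directly from the telescoping identity
\begin{equation*}
\phi(t_i)\psi(t_i) - \phi(t_{i-1})\psi(t_{i-1}) = \phi(t_i)\bigl[\psi(t_i) - \psi(t_{i-1})\bigr] + \bigl[\phi(t_i) - \phi(t_{i-1})\bigr]\psi(t_{i-1}),
\end{equation*}
which is the standard ``Leibniz-type'' splitting for discrete differences. Fix an arbitrary finite increasing sequence $(t_i)_0^n$ in $I$; everything that follows will be uniform in this choice.

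Next, I would apply the triangle inequality for the $\ell^\rho$ norm to the length-$n$ vectors whose $i$-th entries are the two summands on the right-hand side of the identity above. This yields
\begin{equation*}
\left(\sum_{i=1}^n \bigl|\phi(t_i)\psi(t_i) - \phi(t_{i-1})\psi(t_{i-1})\bigr|^\rho\right)^{1/\rho} \le \left(\sum_{i=1}^n |\phi(t_i)|^\rho |\psi(t_i)-\psi(t_{i-1})|^\rho\right)^{1/\rho} + \left(\sum_{i=1}^n |\phi(t_i)-\phi(t_{i-1})|^\rho |\psi(t_{i-1})|^\rho\right)^{1/\rho}.
\end{equation*}
In the first term I would factor out $\|\phi\|_\infty$ by using $|\phi(t_i)| \le \|\phi\|_\infty$ for every $i$, and similarly factor out $\|\psi\|_\infty$ in the second term. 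What remains in each bracket is bounded by $\|\psi\|_{v(\rho),I}$ and $\|\phi\|_{v(\rho),I}$ respectively, since the sequence $(t_i)$ is an admissible partition.

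Finally, taking the supremum over all such sequences $(t_i)_0^n$ on the left gives the desired estimate. There is no real obstacle here; the only points to keep in mind are that the decomposition must be chosen so that the factor pulled out of each sum does not depend on the index (which is why one uses $\phi(t_i)$ in one term and $\psi(t_{i-1})$ in the other, so each discrete difference stays intact), and that the passage from the uniform partition bound to the supremum is immediate because the right-hand side is already independent of the partition.
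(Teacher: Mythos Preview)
Your proof is correct and is exactly the argument the paper gives: the same telescoping identity followed by the $\ell^\rho$ triangle inequality, factoring out $\|\phi\|_\infty$ and $\|\psi\|_\infty$, and taking the supremum over partitions.
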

To prove this, it is enough to write for an increasing sequence $(t_i)$  in $I$
  \begin{equation*}
  \phi(t_i)\psi(t_i) - \phi(t_{i-1})\psi(t_{i-1}) = \phi(t_i)(\psi(t_i) -\psi(t_{i-1})) + (\phi(t_i)- \phi(t_{i-1}))\psi(t_{i-1}),
\end{equation*}
and then take the $\ell^\rho$ norm.

\vskip4pt

We next make some preparations for the proof of Theorem \ref{thm}.

The following estimate of the variation seminorm of $H_t f(x)$  will be useful. Let the interval $I$ be either
$(0,1]$ or $[1,\infty)$. 
From Lemma \ref{var}(a), we conclude that
\begin{align}  \label{important}
 \|H_tf(x)\|_{v(\rho), I}&\le       
    \int_I \left|\frac{\partial}{\partial t} \int K_t(x,u) f(u)\,d\gamma_\infty (u)  \right|\,dt\notag \\
 &=\int_I \left| \int \dot K_t(x,u) f(u) \,d\gamma_\infty (u) \right|\,dt  \notag
\\     &\le \int         \int_I \big|  \dot K_t(x,u)\big| \,dt \, |f(u)|\,  d\gamma_\infty (u).
  \end{align}
To justify moving the differentiation inside the integral in the second step here, we refer to
\cite[Lemma~5.3]{CCS5}.

\vskip4pt

We compute and estimate $\dot K_t(x,u)$.

\begin{lemma}\label{derivate-nucleo}
For all $(x,u) \in \mathbb R^n\times \mathbb R^n$ and
$t>0$, we have
\begin{align}\label{P}
\dot K_t(x,u) =&
\, K_t(x,u)\, \left(
 -\frac {e^{-2t}}{1-e^{-2t}}+\frac{e^{-2t}(e^{-t}u - x)^2}{(1-e^{-2t})^2}+ \frac{e^{-t} u (e^{-t}u - x)     }{1-e^{-2t}}\right).
 \end{align}
Moreover, for $t\ge 1$ one has
 \begin{align}\label{P1}
\big| \dot K_t(x,u) \big|\lesssim e^{R(x)}
\exp
\big(
-c
\left(
e^{-t}\,u- x\right)^2
\big)\big(
e^{-t}\,|u|
 +e^{-2t}\big).         
\end{align}
\end{lemma}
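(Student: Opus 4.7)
The formula (P) should follow by a direct logarithmic differentiation of the Mehler kernel. Writing
\[
\log K_t(x,u) = R(x) - \tfrac12\log(1-e^{-2t}) - \tfrac12\,\frac{(e^{-t}u-x)^2}{1-e^{-2t}},
\]
I would differentiate term by term in $t$. The first nonconstant term produces $-e^{-2t}/(1-e^{-2t})$. For the quotient in the last term, write it as $a(t)/b(t)$ with $a(t)=(e^{-t}u-x)^2$ and $b(t)=1-e^{-2t}$; then $a'(t)=-2e^{-t}u(e^{-t}u-x)$ and $b'(t)=2e^{-2t}$, and applying the quotient rule and multiplying by $-1/2$ yields the other two summands in (P). Multiplying $\dot K_t/K_t$ by $K_t$ gives the claimed formula.

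For (P1) I would exploit the fact that for $t\ge 1$ the quantity $1-e^{-2t}$ is bounded below by $1-e^{-2}$, so all reciprocals $1/(1-e^{-2t})$ and $1/(1-e^{-2t})^2$ are $\lesssim 1$. Therefore each of the three summands in the parenthesis of (P) is controlled by
\[
e^{-2t} + e^{-2t}(e^{-t}u-x)^2 + e^{-t}|u|\,|e^{-t}u-x|.
\]
On the other hand, the prefactor $K_t(x,u)$ is bounded by a constant times $e^{R(x)}\exp\!\bigl(-c_0(e^{-t}u-x)^2\bigr)$ for some $c_0>0$, again thanks to $1-e^{-2t}\gtrsim 1$.

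The final step is the routine absorption of polynomial factors by the Gaussian, using $y^{2}e^{-c_0 y^{2}}\lesssim e^{-c y^{2}}$ and $|y|e^{-c_0 y^{2}}\lesssim e^{-c y^{2}}$ with any $0<c<c_0$, applied to $y=e^{-t}u-x$. This turns the second summand into $\lesssim e^{-2t}\exp(-c(e^{-t}u-x)^2)$ and the third into $\lesssim e^{-t}|u|\exp(-c(e^{-t}u-x)^2)$, while the first summand is trivially $\lesssim e^{-2t}\exp(-c(e^{-t}u-x)^2)$. Collecting the three contributions gives exactly the bound (P1). There is no genuine obstacle here; the only point that requires a little care is keeping track of constants in the successive absorptions of $|e^{-t}u-x|$ factors into the exponential.
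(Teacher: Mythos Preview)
Your proof is correct and follows essentially the same approach as the paper: the paper omits the verification of (P) entirely and, for (P1), proceeds exactly as you do---bounding $1-e^{-2t}\gtrsim 1$ for $t\ge 1$, estimating $K_t\lesssim e^{R(x)}\exp(-c(e^{-t}u-x)^2)$, and then absorbing the powers of $|e^{-t}u-x|$ into the Gaussian by shrinking $c$. If anything, you supply more detail than the paper does.
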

\begin{proof}
We omit the proof of \eqref{P}. When $t\ge 1$, \eqref{P}  implies that
 \begin{align}\label{P2}
\big| \dot K_t(x,u) \big|\lesssim K_t(x,u) \big(
|e^{-t}\,u- x|\,e^{-t}\,|u|+
e^{-2t}\,
(e^{-t}\,u- x)^2 +e^{-2t}\big).         
\end{align}
For $t \ge 1$ \eqref{def:Mehler} shows that  $K_t \lesssim e^{R(x)}\,\exp\big(-c\left(e^{-t}\,u- x\right)^2\big) $. Changing the value of $c$ here,
 we may neglect the factors $e^{-t}\,u- x$ in \eqref{P2} and obtain
\eqref{P1}.
\end{proof}

\vskip35pt

\section{The case of large $t$}\label{t large}

In this section we consider the variation of  $H_tf(x)$ only for $1 \le t < \infty$.

\begin{proposition} \label{t>1}
 For each  $\rho > 2$ the operator that maps   $f \in L^1(\gamma_\infty)$ to the function
  \begin{equation*}
  \| H_t f(x)\|_{v(\rho), [1,+\infty)}, \quad x \in \mathbb R,
\end{equation*}
is of weak type $(1,1)$ with respect to the measure $\gamma_\infty$.
In fact, one has the following stronger result: If  $\rho > 2$ and $\|f\|_{L^1( \gamma_\infty)} = 1$, then
\begin{equation} \label{stronger}
\gamma_\infty
\left\{x\in\R:   \| H_t f(x)\|_{v(\rho),[1,\infty)}
        > \alpha \right\} \lesssim \frac{1}{\alpha \sqrt{\log \alpha}}, \qquad \alpha > 2.
\end{equation}
\end{proposition}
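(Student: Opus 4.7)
The plan is to establish the pointwise bound
\[
F(x) := \|H_t f(x)\|_{v(\rho),[1,\infty)} \lesssim e^{R(x)}\, \|f\|_{L^1(\gamma_\infty)}, \qquad x \in \R,
\]
with an implicit constant independent of $\rho \ge 1$. Once this is in hand, \eqref{stronger} follows from Chebyshev together with the one-sided Gaussian tail estimate $\gamma_\infty\{|x|>b\} \lesssim b^{-1} e^{-b^2/2}$: with $\|f\|_{L^1(\gamma_\infty)}=1$, the level set $\{F>\alpha\}$ lies in $\{|x|>\sqrt{2\log(\alpha/C_0)}\}$ for $\alpha$ large, so $\gamma_\infty\{F>\alpha\} \lesssim \alpha^{-1}(\log\alpha)^{-1/2}$. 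For $\alpha$ in any bounded range above $2$, the bound is trivial, since its right-hand side is bounded below and $\gamma_\infty$ is a probability measure.

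For the pointwise bound I would first apply \eqref{important} with $I=[1,\infty)$ and then insert the derivative estimate \eqref{P1} from Lemma~\ref{derivate-nucleo}. The task reduces to verifying
\[
\int_1^\infty \exp\bigl(-c(e^{-t}u - x)^2\bigr) \, \bigl(e^{-t}|u| + e^{-2t}\bigr)\, dt \lesssim 1
\]
uniformly in $(x,u)$. The natural substitution $s=e^{-t}$, $dt=-ds/s$, rewrites this as
\[
\int_0^{e^{-1}} \exp\bigl(-c(su-x)^2\bigr)(|u| + s)\, ds.
\]
The $s$-piece is controlled trivially by $\int_0^{e^{-1}} s\,ds = 1/(2e^{2})$. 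For the $|u|$-piece, I would change variables via $v = su$ (with the obvious care of signs when $u<0$; the case $u=0$ is immediate) to obtain a Gaussian integral $\int_J \exp(-c(v \mp x)^2)\,dv$ over an interval $J \subset \R$ of length $e^{-1}|u|$, which is bounded by $\int_\R \exp(-cv^2)\,dv$. Combining the two pieces gives the pointwise bound claimed above.

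The main point --- rather than obstacle --- is that the hypothesis $\rho>2$ plays no role in this range of $t$: since $v(\rho)$ is decreasing in $\rho$, the bound for $v(1)$ provided by Lemma~\ref{var}(a) via \eqref{important} already suffices. What makes the argument go through is the algebraic fit between the factors $e^{-t}|u|$ and $e^{-2t}$ appearing in \eqref{P1} and the measure $dt = -ds/s$ arising from the substitution $s = e^{-t}$: these conspire to make the time-integral a universal constant times $e^{R(x)}$, and the weak-type bound \eqref{stronger} then comes purely from the Gaussian decay of $\gamma_\infty$.
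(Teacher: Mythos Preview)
Your proof is correct and follows essentially the same route as the paper: both reduce to the pointwise bound $\|H_tf(x)\|_{v(\rho),[1,\infty)} \lesssim e^{R(x)}\|f\|_{L^1(\gamma_\infty)}$ via \eqref{important} and \eqref{P1}, then handle the time integral by a change of variables turning the $e^{-t}|u|$ term into a Gaussian integral (the paper substitutes $y = e^{-t}u - x$ directly, you do $s=e^{-t}$ then $v=su$, which is the same thing), and finally invoke the Gaussian tail estimate $\gamma_\infty\{e^{R(x)}>\beta\}\lesssim \beta^{-1}(\log\beta)^{-1/2}$. Your observation that $\rho>2$ is not needed here is also implicit in the paper's argument, which bounds the $v(1)$ seminorm.
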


Notice
that, when $t$ is large, the estimate \eqref{main-ineq}  is enhanced by a logarithmic factor. In \cite{CCS2} and \cite{CCS3},  an analogous  phenomenon was
already  observed  both for the Ornstein--Uhlenbeck maximal operator and for the Gaussian Riesz transform.

\begin{proof}
 Let
 $f$  be normalized in $L^1( \gamma_\infty)$.
 We integrate  \eqref{P1}, getting
 \begin{align*}
 \int_{1}^{\infty}  \big| \dot K_t(x,u) \big|   \, dt    \lesssim e^{R(x)}
 \int_{1}^{\infty} \exp \big( -c \left(e^{-t}\,u- x\right)^2\big)\big(e^{-t}\,|u| +e^{-2t}\big) \,dt.
\end{align*}
In the last parenthesis in the second integrand here, we consider first only the term $e^{-t}\,|u|$ and make the
 change of variable $e^{-t}\,u- x=y$, separately for $u>0$ and $u<0$. As a result,
\begin{align*}
\int_1^\infty
\exp\big({
-c
(
e^{-t}\,u- x)^2
}\big)\,
e^{-t}\,|u| \,dt
\le\int_{\R}\exp(-cy^2)\, dy \simeq 1.
\end{align*}
Taking also the term $e^{-2t}$ in the integral above into account, we conclude that
\begin{align*}
 \int_{1}^{\infty}  \big| \dot K_t(x,u) \big|   \, dt    \lesssim e^{R(x)}.
 \end{align*}

 Now \eqref{important} leads to
 \begin{align*}
  \|H_tf(x)\|_{v(\rho), [1,\infty)}   \lesssim e^{R(x)}.
 \end{align*}
 It is easily seen that
 \begin{equation*}
   \gamma_\infty\left\{x : e^{R(x)} > \beta\right\}    \lesssim \frac{1}{\beta  \sqrt {\log \beta}}, \qquad \beta > 2.
 \end{equation*}
 From this    \eqref{stronger} follows,  and  since  \eqref{main-ineq} is trivial for  $\alpha \le 2$,  Proposition~\ref{t>1} is proved.
 \end{proof}

\vskip35pt

   \section{The global case with small $t$}\label{global}

  We first split the operator $H_t$ in a local and a global part, in a way adapted to   $\gamma_\infty$.
  Let  $\eta \ge 0$ be  a smooth function in $\Bbb R_+$ which is 1 in $(0,1/2]$ and 0 in $[1,\infty)$.
The local part of the semigroup is defined by
\begin{equation*}
  H_t^{\mathrm{loc}}  f(x) =  \int f(u)\, K_t(x,u)\,\eta((1+|x|)|x-u|)\,d\gamma_\infty(u).
\end{equation*}
The global part $H_t^{\mathrm{glob}} = H_t -  H_t^{\mathrm{loc}}$ is given by a similar expression, with
$\eta(.)$ replaced by $1 - \eta(.)$.

 \begin{proposition} \label{t<1,global}
 For each  $\rho > 2$ the operator that maps   $f \in L^1(\gamma_\infty)$ to the function
  \begin{equation*}
  \| H_t^{\mathrm{glob}} f(x)\|_{v(\rho), (0,1]}, \quad x \in \mathbb R,
\end{equation*}
is of weak type $(1,1)$ with respect to the measure $\gamma_\infty$.
\end{proposition}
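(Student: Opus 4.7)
Following the scheme of Proposition \ref{t>1}, the plan is to apply Lemma \ref{var}(a) to $t \mapsto H_t^{\mathrm{glob}} f(x)$ and reduce to the $L^1((0,1],dt)$ norm of its $t$-derivative. Since the spatial cutoff $1-\eta((1+|x|)|x-u|)$ does not involve $t$, inequality \eqref{important} applied to $K_t^{\mathrm{glob}}(x,u) := K_t(x,u)(1 - \eta((1+|x|)|x-u|))$ yields
\begin{equation*}
  \|H_t^{\mathrm{glob}} f(x)\|_{v(\rho),(0,1]} \le \int |f(u)|\, \Phi(x,u)\, d\gamma_\infty(u),
\end{equation*}
where
\begin{equation*}
  \Phi(x,u) := \big(1 - \eta((1+|x|)|x-u|)\big) \int_0^1 |\dot K_t(x,u)|\, dt.
\end{equation*}
It therefore suffices to prove that the positive integral operator with kernel $\Phi$ is of weak type $(1,1)$ on $(\R, \gamma_\infty)$.

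For the pointwise bound on $\dot K_t$, I would start from \eqref{P}. Writing $r = e^{-t}$, $s = 1 - r^2$, $a = ru - x$, and using $s \simeq t$ for $t \in (0,1]$, the polynomial factor in \eqref{P} is dominated by $1/s + a^2/s^2 + |u||a|/s$. Absorbing each power of $a^2/s$ into the Gaussian factor $\exp(-a^2/(2s))$ of $K_t$ via the elementary inequality $y^k e^{-y/2} \lesssim e^{-y/4}$ gives
\begin{equation*}
  |\dot K_t(x,u)| \lesssim e^{R(x)} \Big(\frac{1}{t^{3/2}} + \frac{|u|}{t}\Big) \exp\!\left(-c\,\frac{(e^{-t}u - x)^2}{t}\right).
\end{equation*}
Integration over $t \in (0,1]$ via a change of variable of the form $\tau = (e^{-t}u-x)^2/t$ then produces a pointwise bound $\Phi(x,u) \lesssim e^{R(x)} M(x,u)$, with $M$ explicit: of order $1/|x-u|$ (up to harmless logarithmic factors arising from the $|u|/t$ term) for small $|x-u|$ within the global region, and with fast Gaussian decay $e^{-c(x-u)^2}$ for $|x-u| \ge 1$.

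The remaining and most delicate step is the weak type $(1,1)$ bound for the operator with kernel $e^{R(x)} M(x,u)$ (supported in the global region) against $d\gamma_\infty(u)$. Following the strategy announced in the introduction and the template from \cite{CCS3,CCS5}, I would partition $\R$ into intervals on which $e^{-R(u)}$ is essentially constant, so that $d\gamma_\infty$ restricted to each interval behaves like a scalar multiple of Lebesgue measure. The residual Gaussian factor $\exp(-c(e^{-t}u-x)^2/t)$ forces $u$ to stay close to $x$, so on the effective support of $M$ one has $R(u) \approx R(x)$; this is how the factor $e^{R(x)}$ is absorbed by $e^{-R(u)}$. The remaining $1/|x-u|$ singularity is then handled, interval by interval, by the weak type $(1,1)$ bound for the Hardy--Littlewood maximal function or for a truncated Hilbert transform, while the Gaussian decay at large $|x-u|$ controls the summation over non-adjacent intervals. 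The main obstacle is precisely this weak type $(1,1)$ verification: a naive Schur estimate fails, because the global cutoff $|x-u| \gtrsim 1/(1+|x|)$ only gives logarithmic integrability in $x$ when $|u|$ is large, so the cancellation between $e^{R(x)}$ and $e^{-R(u)}$ coming from the localization $u \approx x$ must be implemented through the Gaussian-adapted partition, exactly as in \cite{CCS3,CCS5}.
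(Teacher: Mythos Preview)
Your setup through the kernel $\Phi(x,u)=(1-\eta)\int_0^1|\dot K_t(x,u)|\,dt$ is fine, but from there the paper proceeds very differently and far more simply, and your proposed endgame has a real gap.

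The paper does not integrate the pointwise bound on $|\dot K_t|$. Instead it observes from \eqref{P} that
\[
\dot K_t(x,u)=K_t(x,u)\,\frac{P_{x,u}(e^{-t})}{(1-e^{-2t})^2}
\]
with $P_{x,u}$ a polynomial of degree $\le 4$, so $t\mapsto \dot K_t(x,u)$ has at most four zeros in $(0,1)$ and therefore
\[
\int_0^1|\dot K_t(x,u)|\,dt\le 10\,\sup_{t\in(0,1]}K_t(x,u).
\]
Everything then reduces to the pointwise estimate
\[
\sup_{t\in(0,1]}K_t(x,u)\,\bigl(1-\eta((1+|x|)|x-u|)\bigr)\lesssim e^{R(x)}(1+|x|),
\]
which the paper proves by a short bootstrap using the global cutoff $|x-u|>1/(2(1+|x|))$: either $t(1+|x|)^2\gtrsim 1$ and $t^{-1/2}\lesssim 1+|x|$, or $t$ is small and one shows $|x-e^{-t}u|\gtrsim 1/(1+|x|)$, so the Gaussian factor absorbs $t^{-1/2}$. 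This bound is \emph{uniform in $u$}, hence $\|H_t^{\mathrm{glob}}f(x)\|_{v(\rho),(0,1]}\lesssim e^{R(x)}(1+|x|)\,\|f\|_{L^1(\gamma_\infty)}$ and the weak type follows at once from the elementary tail estimate $\gamma_\infty\{e^{R(x)}(1+|x|)>\beta\}\lesssim 1/\beta$. No partition, no maximal function, no Hilbert transform.

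Your plan has two problems. First, the claim of ``fast Gaussian decay $e^{-c(x-u)^2}$ for $|x-u|\ge 1$'' is false in general: when $u$ and $x$ have the same sign and $|x|<|u|\le e|x|$ there is a $t^\ast\in(0,1]$ with $e^{-t^\ast}u=x$, and near $t^\ast$ your exponential factor $\exp(-c(e^{-t}u-x)^2/t)$ gives no decay at all; the change of variable $\tau=(e^{-t}u-x)^2/t$ is not monotone there. So the asserted form of $M(x,u)$ is not justified, and the subsequent argument about absorbing $e^{R(x)}$ by $e^{-R(u)}$ via $u\approx x$ breaks down in exactly this regime. Second, even granting a bound like $M(x,u)\lesssim 1/|x-u|$ near the diagonal, you are overcomplicating the conclusion: on the global region $|x-u|\gtrsim 1/(1+|x|)$, so $1/|x-u|\lesssim 1+|x|$ already gives a bound uniform in $u$, and the weak type is immediate from the level-set estimate for $e^{R(x)}(1+|x|)$; the proposed reduction to Hardy--Littlewood and truncated Hilbert transforms via a Gaussian-adapted partition is unnecessary.
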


  \begin{proof}
    We first give an estimate of the number of zeros of the function $t \mapsto \dot K_t(x,u)$ for $0 < t < 1$.
    From  \eqref{P} we see that we can write
     \begin{equation*}
   \dot K_t(x,u) = K_t(x,u)\, \frac{P_{x,u}(e^{-t})}{(1-e^{-2t})^2},
  \end{equation*}
  where $P_{x,u}$ is a polynomial of degree at most 4, with coefficients depending on  $x$ and $u$. Thus  $\dot K_t(x,u)$ can have at most four zeros in $(0,1)$.
     Denote these zeros by $t_1,\dots, t_{N-1}$; the $t_i$ and also $N$ will depend on $(x,u)$, and $N \le 5$. Set also
$t_0 = 0$ and $t_N = 1$.
Then
  \begin{equation*}
\int_0^1  |\dot K_t(x,u)|\,dt = \sum_1^N \left| \int_{t_{i-1}}^{t_i} \dot K_t(x,u)\,dt \right| \le
10\, \sup_{(0,1]}K_t(x,u).
  \end{equation*}

Since the computation \eqref{important} remains valid with an extra factor $1-\eta((1+|x|)|x-u|)$, we conclude
 \begin{equation} \label{33}
\| H_t^{\mathrm{glob}} f(x)\|_{v(\rho), (0,1]} \lesssim
\int     |f(u)| \sup_{(0,1]}K_t(x,u)\,(1-\eta((1+|x|)|x-u|))\,d\gamma_\infty(u)
 \end{equation}

We claim that for $0<t \le 1$ and all $(x,u)$
\begin{equation}  \label{supKt}
 \sup_{(0,1]}K_t(x,u)\,(1-\eta((1+|x|)|x-u|)) \lesssim e^{R(x)}\,(1+|x|).
  \end{equation}

  If $1-\eta((1+|x|)|x-u|) \ne 0$, we have  $|x-u| >  1/(2(1+|x|))$ and thus for $0<t\le 1$ also
    \begin{align*}
 (1+|x|)^{-1} & < 2 |x-u| \le 2|x-e^{t}\,x| + 2|e^{t}\,x - u |
  = 2 (e^{t}-1)|x| + 2e^{t} |x-e^{-t}u|   \\
 & \le 2et (1+|x|) + 2e|x-e^{-t}u|.
  \end{align*}
  Now, if $t (1+|x|)^2 < 1/(4e)$ we get a bootstrap implying
  \begin{equation*}
 (1+|x|)^{-1}  <  4e|x-e^{-t}u|.
  \end{equation*}
  Then we see from \eqref{def:Mehler} that
     \begin{align*}
 e^{-R(x)}\,K_t(x,u) & \simeq t^{-1/2}\, \exp \left( -\frac12 \frac{(e^{-t}u-x)^2}{t}\right)  \\ &\le
       t^{-1/2}\, \exp \left(-\frac12\, \frac1{16e^2t(1+|x|)^2} \right)  \lesssim 1+|x|,
  \end{align*}
  and \eqref{supKt} follows. On the other hand, if $t (1+|x|)^2 \ge 1/(4e)$,   \eqref{supKt} also follows, since
  then $t^{-1/2} \lesssim 1+|x|$.

  Combining now \eqref{33} and \eqref{supKt}, we get
  \begin{equation*}
\| H_t^{\mathrm{glob}} f(x)\|_{v(\rho), (0,1]} \lesssim  e^{R(x)}\,(1+|x|) \, \| f \|_{L^1(\gamma_\infty)}.
 \end{equation*}
This ends the proof of Proposition \ref{t<1,global}, because
\begin{equation*}
   \gamma_\infty\left\{x: \,   e^{R(x)}\,(1+|x|) > \beta \right\} \lesssim  \frac1\beta,  \qquad \beta > 0.
\end{equation*}

  \end{proof}

\vskip4pt

\vskip35pt

   \section{The local case with small $t$}\label{local1}

This section consists of the proof of the following result.

   \begin{proposition} \label{locsmallt}
     For each  $\rho > 2$ the operator that maps   $f \in L^1(\gamma_\infty)$ to the function
  \begin{equation*}
  \| H_t^{\mathrm{loc}} f(x)\|_{v(\rho),(0,1]}, \quad x \in \mathbb R^n,
\end{equation*}
is of weak type $(1,1)$ with respect to the measure $\gamma_\infty$.
   \end{proposition}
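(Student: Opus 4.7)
The plan is to follow the outline announced in the introduction: first reduce the weak-type estimate to an analogous one-interval statement (Proposition~\ref{prp}) on a Calderón--Zygmund-type partition adapted to $\gamma_\infty$, with Lebesgue measure in place of the Gaussian, and then prove that statement by approximating the Mehler convolution by mean-value-type operators for which a variational weak-type inequality is already available.

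Concretely, let $\{I_k\}$ be an admissible covering of $\mathbb R$ by intervals centered at $x_k$ with lengths $\simeq (1+|x_k|)^{-1}$ and bounded overlap. On each $I_k$, and on a fixed enlargement $I_k^*$, the Gaussian density $e^{-R}$ is essentially constant, $\simeq e^{-R(x_k)}$. The cutoff $\eta((1+|x|)|x-u|)$ in the definition of $H_t^{\mathrm{loc}}$ forces, for $x \in I_k$, the integration variable $u$ to lie in $I_k^*$ (with a suitable choice of enlargement factor). Replacing $d\gamma_\infty(u)$ by $e^{-R(x_k)}(2\pi)^{-1/2}\,du$ then alters the operator only by a bounded factor, so the claim reduces to a uniform weak-type $(1,1)$ bound on each $I_k$ with respect to Lebesgue measure:
\[
  |\{x \in I_k : \|\widetilde H_t g(x)\|_{v(\rho),(0,1]} > \alpha\}| \lesssim \frac{\|g\|_{L^1(I_k^*,du)}}{\alpha},
\]
where $\widetilde H_t$ is the local Mehler operator acting against Lebesgue measure and $g = f\chi_{I_k^*}$. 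This is the content of Proposition~\ref{prp}. Summing this inequality over $k$ using the bounded overlap of the $\{I_k^*\}$ and reinserting the essentially constant Gaussian weight on each $I_k^*$ then recovers Proposition~\ref{locsmallt}.

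To establish Proposition~\ref{prp} on a single interval, I would exploit that in the local regime $x \in I_k$, $u \in I_k^*$, $0 < t \le 1$ the exponent in $K_t$ is essentially $-(x-u)^2/(2t)$: one uses $1-e^{-2t} \simeq 2t$, writes $e^{-t}u - x = (u-x) - (1-e^{-t})u$, and absorbs the cross term by means of $|x-u| \lesssim (1+|x_k|)^{-1}$ and $|u| \lesssim 1+|x_k|$. This reduces, up to a perturbation $E_t$ whose $t$-derivative is integrable on $(0,1]$ (and therefore contributes to the $v(\rho)$-seminorm an amount controlled via Lemma~\ref{var}(a) and Lemma~\ref{var_prod}), the problem to the $v(\rho)$-variation of the Gauss--Weierstrass convolution of an $L^1(du)$ function on $I_k^*$. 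By a comparison argument, the variation of such a convolution is in turn dominated by the $v(\rho)$-variation of mean values of the same function over intervals centered at $x$; the latter operator is of weak type $(1,1)$ for $\rho > 2$ by the ``known theorem about the variation of mean values'' referred to in the introduction (a Jones--Rosenblatt--Wierdl-type result that itself rests on Lépingle's inequality).

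The main obstacle I expect is precisely the perturbative splitting $K_t = A_t + E_t$. One needs $E_t$ to be controlled in a genuinely integrated sense: $\int_0^1 |\dot E_t(x,u)|\,dt$ must admit a pointwise majorant integrable in $u$ over $I_k^*$, uniformly in $x \in I_k$, so that its contribution to the variation fits into a weak-type $(1,1)$ bound rather than only an $L^p$ bound. The explicit formula \eqref{P} for $\dot K_t$, together with the essentially constant density on $I_k^*$ and Lemmas~\ref{var} and \ref{var_prod}, should provide enough control, but the bookkeeping is delicate; this is also where the geometric restriction to dimension one plays its essential role, since the reduction to variation of mean values over one-dimensional intervals does not have an obvious higher-dimensional analogue.
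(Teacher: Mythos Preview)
Your reduction to the single-interval Lebesgue statement (Proposition~\ref{prp}) via the admissible covering and the essential constancy of $e^{-R}$ on each $I_k^*$ is correct and coincides with the paper's argument.

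For the proof of Proposition~\ref{prp} itself, however, the paper does \emph{not} approximate the Mehler kernel by the Euclidean heat kernel. Instead it applies the layer-cake trick of Campbell--Jones--Reinhold--Wierdl directly to the exact local kernel: writing
\[
e^{-y^2/2}=-\int_0^\infty \chi_{\{y<s\}}\,\frac{d}{ds}e^{-s^2/2}\,ds,
\qquad
\eta(y)=-\int_{1/2}^{1}\chi_{\{y<\sigma\}}\,\eta'(\sigma)\,d\sigma,
\]
one obtains $\mathcal H_t^{\mathrm{loc}}g(x)=\iint R_t^{s,\sigma}g(x)\,d\sigma\,ds$, where $R_t^{s,\sigma}g(x)$ is $(1-e^{-2t})^{-1/2}$ times the integral of $g$ over an explicit interval $J_t(s,\sigma)$. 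That interval is split at $x$ into one-sided pieces $J_t^{\pm}(s,\sigma)$, so that $R_t^{s,\sigma}g(x)$ becomes a sum of two terms of the form $F_\pm(t)\cdot M^{\mp}_{|J_t^{\pm}|}g(x)$. The main analytic work (Lemma~\ref{Fpm}) is to show that the factors $F_\pm=|J_t^{\pm}|/\sqrt{1-e^{-2t}}$ satisfy $\|F_\pm\|_{L^\infty(0,T]}+\|F_\pm\|_{v(\rho),(0,T]}\lesssim s+1$, after which Lemma~\ref{var_prod} and Theorem~\ref{jones} (the Jones--Kaufman--Rosenblatt--Wierdl variational inequality for one-sided averages) give a bound for $\|R_t^{s,\sigma}g(x)\|_{v(\rho)}$ by $(s+1)$ times a sum of four operators of weak type $(1,1)$ that are \emph{independent of $s$ and $\sigma$}; integrating in $s,\sigma$ finishes.

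Your perturbative route $K_t=A_t+E_t$ is not wrong in spirit, but it has a gap in two places. First, the ``comparison argument'' that dominates the $v(\rho)$-variation of the heat convolution by the $v(\rho)$-variation of means is left unspecified; the only mechanism that does this \emph{pointwise} (which you need, since $L^{1,\infty}$ is not normed) is precisely the layer-cake decomposition above, and that decomposition applies just as well to the Mehler kernel itself, making the approximation step superfluous. If instead you plan to quote a black-box weak $(1,1)$ for the heat-semigroup variation, note that your $A_t$ still carries the cutoff $\eta((1+|x|)|x-u|)$ and is therefore not the standard heat semigroup; stripping the cutoff reintroduces a global error that must be handled separately. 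Second, the claim that $\int_0^1|\dot E_t(x,u)|\,dt$ admits a majorant integrable in $u$ uniformly in $x$ hinges on cancellation between $\dot K_t$ and $\dot A_t$ near $t=0$; the extra drift term $K_t\cdot e^{-t}u(e^{-t}u-x)/(1-e^{-2t})$ in \eqref{P} has no counterpart in $\dot A_t$, and while its $t$-integral turns out to be $O(1+|x_k|)$ on the local region, this and the remaining cross terms require a computation you have not supplied. In short, the paper's direct decomposition avoids both the approximation and the remainder estimate you flag as the main obstacle.
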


\vskip25pt

\subsection*{Splitting of the line into local intervals}\label{subs1}

    ~

     \vskip4pt


 The localization means that the value $H_t^{\mathrm{loc}}  f(x)$ depends only on the restriction of $f$ to the interval $\{u:\:|u-x| \le 1/(1+|x|)\}$, and we will split the line into intervals of similar type.
 Choose an increasing sequence $(x_j)_0^\infty$ with $x_0 =0$ such that for $j = 0,1,\dots$
 \begin{equation*}
x_{j+1} - \frac{1}{1+x_{j+1}} = x_{j} + \frac{1}{1+x_{j}}.
 \end{equation*}
This recursion formula determines the sequence uniquely. We have $x_{j+1} -x_{j} < 2$ for all $j \ge 0$, so that $x_j \le 2j$. Thus $x_{j+1} -x_{j} \gtrsim 1/j $ and $x_j \to +\infty$ as $j \to \infty$. (In fact, $x_{j}$ is close to $2\,\sqrt {j}-1$,  as shown in the Appendix.) For $j<0$ we let
$x_j = - x_{|j|}$.

 The intervals
 \begin{equation*}
I_j = \left[ x_{j} - \frac{1}{1+|x_{j}|}, \: x_{j} + \frac{1}{1+|x_{j}|}\right], \qquad j\in \Bbb Z,
 \end{equation*}
 are pairwise disjoint except for endpoints, and they cover $\Bbb R$.
 If $\mathrm{supp}\, f \subset I_j$, we claim that the support of $H_t^{\mathrm{loc}} f $ is contained in the interval
 \begin{equation*}
\widetilde I_j = \left[ x_{j} - \frac{4}{1+|x_{j}|}, \: x_{j} + \frac{4}{1+|x_{j}|}\right].
 \end{equation*}
 To verify this, let $x \in \mathrm{supp}\,H_t^{\mathrm{loc}} f$. Then $x$ has distance at most $1/(1+|x|)$ from some point in $I_j$,  so that
 \begin{equation}\label{x-xj}
   |x-x_j| \le\frac 1{1+|x|} + \frac 1{1+|x_j|},
 \end{equation}
  which implies
 \begin{equation*}             
   1+|x| \ge 1+|x_j| - \frac 1{1+|x|} - \frac 1{1+|x_j|} \ge |x_j| - 1 \ge \frac {1+|x_j|}3,
 \end{equation*}
 the last inequality holding only  if $|x_j| \ge 2$. But if  $|x_j| < 2$, one has the same estimate, since then
 $1+|x| \ge 1 \ge ( {1+|x_j|})/3$. The claim now follows from \eqref{x-xj}.

We also observe that if  $\mathrm{supp} \,f \subset I_j$ and $j>0$, then
 $\mathrm{supp} \,H_t^{\mathrm{loc}}f \subset \{x \ge 0\} $. Indeed, $I_j \subset [1,\infty)$ when
  $j>0$, so if $x \in \mathrm{supp} \,H_t^{\mathrm{loc}}f$ we must have $x \ge 1 - 1/(1+|x|) \ge 0$.


   The intervals $\widetilde I_j$ have bounded overlap. Therefore, it is enough to prove Theorem~\ref{thm} for functions $f$ supported in $I_j$, with a bound that is uniform in $j \in \Bbb Z$.

In each $\widetilde I_j$, the density          
of $\gamma_\infty$ is essentially constant,
since $e^{-R(x)} \simeq e^{-R(x_j)}$ for $x \in \widetilde I_j$,
 and this is uniform in $j$. Therefore, we can pass to Lebesgue measure in $u$ and in $x$.  We replace
$f \in L^1(\gamma_\infty)$, supported in $I_j$, by $g(u) = f(u)\,e^{-R(u)} \in L^1(du)$, with the same support. Instead of
$H_t^{\mathrm{loc}}$, we can then consider the   operator
\begin{equation*}
\mathcal  H_t^{\mathrm{loc}}  g(x) = \frac {1}{\sqrt{1-e^{-2t}}} \, \int g(u)\, \exp\left(-\frac12 \, \frac{(e^{-t}u-x)^2}{1-e^{-2t}}\right)\,\eta((1+|x|)|x-u|)\,du,
\end{equation*}
where we deleted the essentially constant factor  $e^{R(x)}$.

We conclude from the above that the following proposition implies Proposition~\ref{locsmallt}.                   

\begin{proposition}\label{prp}
  For $\rho >2$ and each $j\in \Bbb Z$, the operator that maps  $g \in L^1(I_j)$ to
 \begin{equation*}
  \| \mathcal H_t^{\mathrm{loc}}  g(x)\|_{v(\rho), (0,1]}
  \end{equation*}
 is bounded from $L^1(I_j)$ to $L^{1,\infty}(\widetilde I_j)$,
 where the intervals are endowed with the  Lebesgue measure. This is  uniform in $j$.
\end{proposition}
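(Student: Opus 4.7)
The plan is to use a layer-cake representation of the Gaussian kernel in $\mathcal{H}_t^{\mathrm{loc}}$ to rewrite the operator as a superposition of Hardy--Littlewood averages of $g\,\eta_x$ over a one-parameter family of intervals, and then to reduce the $v(\rho)$-variation in $t$ to the classical weak type $(1,1)$ estimate for the $v(\rho)$-variation of mean values, known for $\rho>2$.

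Starting from the identity
\[
 e^{-y^2/(2\sigma^2)} \;=\; \int_0^{\infty} s\,e^{-s^2/2}\,\mathbf{1}_{\{|y|<\sigma s\}}\,ds
\]
with $y = e^{-t}u-x$ and $\sigma = \sqrt{1-e^{-2t}}$, the condition $|y|<\sigma s$ rewrites as $|u-e^t x|<r(t,s):=s\sqrt{e^{2t}-1}$. Using $\sqrt{e^{2t}-1}/\sqrt{1-e^{-2t}}=e^t$ and writing $h_x:=g\,\eta_x$ with $\eta_x(u)=\eta((1+|x|)|x-u|)$, and $A_r h(c)=\frac{1}{2r}\int_{c-r}^{c+r}h$, a short computation yields
\[
\mathcal{H}_t^{\mathrm{loc}}g(x) \;=\; \int_0^{\infty} 2 s^2 e^{-s^2/2}\,e^t\,A_{r(t,s)}h_x(e^t x)\,ds.
\]
Minkowski's inequality brings the $v(\rho)$-seminorm inside the $s$-integral (pointwise in $x$), and Lemma~\ref{var_prod} absorbs the slowly varying factor $e^t$ at the cost of a maximal-function error. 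The fast Gaussian decay $s^2 e^{-s^2/2}$ handles the $s$-integration, and the task reduces to a weak $(1,1)$ bound in $g$ for $\|A_{r(t,s)}h_x(e^t x)\|_{v(\rho),(0,1]}$, uniformly in $s$ up to polynomial growth.

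For fixed $s$ I would decompose
\[
A_{r(t,s)}h_x(e^t x) \;=\; A_{r(t,s)} h_x(x) \;+\; \bigl(A_{r(t,s)} h_x(e^t x) - A_{r(t,s)} h_x(x)\bigr).
\]
Since $t\mapsto r(t,s)$ is monotone on $(0,1]$, the variation of the first summand equals the variation of $r\mapsto A_r h_x(x)$ over a bounded range of $r$, which is of weak type $(1,1)$ by the known theorem on the $v(\rho)$-variation of mean values applied to $h_x$. The second summand is a difference of averages of $h_x$ with identical radius $r(t,s)$ but centres $e^t x$ and $x$; in one dimension this difference writes as a sum of integrals of $h_x$ over two intervals of length $(e^t-1)|x|$, so it is controlled pointwise by $(e^t-1)|x|/r(t,s)$ times the Hardy--Littlewood maximal function of $h_x$ at nearby points. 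The $\eta_x$-cut-off confines $h_x$ to a short sub-interval of $\widetilde I_j$ where $(1+|x|)\simeq(1+|x_j|)$, and standard Calderón--Zygmund bookkeeping together with the weak $(1,1)$ of the maximal operator keeps this contribution of weak type $(1,1)$, uniformly in $j$.

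The main obstacle is the simultaneous motion of both the centre $e^t x$ and the radius $r(t,s)$ of the averaging interval, since the classical variation-of-mean-values theorem only addresses varying radius at a fixed centre. In one dimension the necessary discrepancy reduces to scalar integrals of $h_x$ over short intervals, which can be dealt with explicitly; in higher dimensions one would face a moving ball, for which no such reduction is available, matching the authors' remark that the argument of Section~\ref{local1} does not extend beyond $n=1$.
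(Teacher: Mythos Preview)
Your opening layer-cake is the same device the paper uses, and the overall plan---reduce to the variation-of-means theorem of Jones et al.---is correct. But the decomposition you propose has a genuine gap.

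The serious problem is your second summand
\[
A_{r(t,s)}h_x(e^t x)-A_{r(t,s)}h_x(x).
\]
You bound it \emph{pointwise} by $\dfrac{(e^t-1)|x|}{r(t,s)}\cdot \mathcal M h_x$, and then invoke the weak $(1,1)$ of the maximal operator. A pointwise bound in $t$ controls the $L^\infty$ norm of $t\mapsto D(t)$, not its $v(\rho)$ seminorm; ``Calder\'on--Zygmund bookkeeping'' does not bridge that gap. Writing $D(t)$ as a sum of integrals over two intervals of length $(e^t-1)|x|$ does not help either: those intervals have both endpoints moving with $t$ (through $r(t,s)$ and through $e^t x$), so you are back to the moving-centre problem you yourself flag as the obstacle. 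Nothing you wrote produces a bound of the form $\|D(\cdot)\|_{v(\rho),(0,1]}\lesssim (1+s)\,G(x)$ with $G$ of weak type $(1,1)$.

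There is a smaller issue with the first summand: you apply the variation-of-means theorem to $h_x=g\,\eta_x$, a function that \emph{depends on the point $x$} where you evaluate. Theorem~\ref{jones} gives a weak-type inequality for a fixed integrand, so the level-set bound for $x\mapsto\|A_{r}h_x(x)\|_{v(\rho)}$ does not follow directly. This can be repaired (split the $r$-range at $r\simeq 1/(1+|x|)$, or layer-cake $\eta$ as well), but it needs to be said.

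The paper sidesteps both issues by a different splitting. It layer-cakes $\eta$ too, so the integrand is $g$ itself (not $g\,\eta_x$) over an explicit interval $J_t(s,\sigma)$. It then cuts $J_t(s,\sigma)$ at the fixed point $x$, writing
\[
R_t^{s,\sigma}g(x)=F_+\,M^+_{|J_t^+|}g(x)\pm F_-\,M^\mp_{|J_t^-|}g(x),
\]
with one-sided means centred at $x$ and scalar factors $F_\pm=|J_t^\pm|/\sqrt{1-e^{-2t}}$. The whole difficulty is then concentrated in showing $\|F_\pm\|_{L^\infty}+\|F_\pm\|_{v(\rho)}\lesssim 1+s$; this is done by splitting $(0,T]$ into a bounded number of subintervals on which $F_\pm$ is monotone, using the explicit function $Q_s(t)=x(e^t-1)-s\sqrt{e^{2t}-1}$ and a polynomial-zeros argument. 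After that, Lemma~\ref{var_prod} and Theorem~\ref{jones} finish the proof. Your shift-correction scheme does not isolate comparable scalar factors with controllable variation, and that is where it breaks down.
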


\vskip25pt

\subsection*{Proof of Proposition \ref{prp}}\label{subs2}

~

 \vskip4pt

  For  symmetry reasons, it is enough to consider only $j \ge 0$ and only points $x \in \Bbb R_+$.
 With  $j \ge 0$ fixed, we let $g \in L^1(I_j)$. The expression for  $\mathcal  H_t^{\mathrm{loc}}  g(x)$ will be rewritten in terms of integrals of only $f(u)$ over many intervals which depend on $t$. Here we follow
  \cite[proof of Lemma 2.4]{Campbell}, writing
  \begin{equation*}
\exp (-y^2/2) = - \int_{y}^{\infty} \frac{d e^{-s^2/2}}{ds}\,ds = - \int_{0}^{\infty} \chi_{y<s}\:\frac{d e^{-s^2/2}}{ds}\:ds
 \end{equation*}
and
\begin{equation*}
\eta (y) = - \int_{y}^{\infty}\frac{d \eta(\sigma)}{d\sigma}\,d\sigma =
 - \int_{1/2}^{1}  \chi_{y<\sigma}\: \frac{d \eta(\sigma)}{d\sigma}\:d\sigma.
 \end{equation*}

 As a result,
 \begin{equation} \label{Ht}
  \mathcal  H_t^{\mathrm{loc}} f(x) =
    \int_{0}^{\infty}\frac{d e^{-s^2/2}}{ds}\,    \int_{1/2}^{1}\frac{d \eta(\sigma)}{d\sigma}\,
 R_t^{s,\sigma}g(x) \;d\sigma\,ds,
 \end{equation}
  where
  \begin{equation}\label{Rt}
    R_t^{s,\sigma}g(x) = \frac 1{\sqrt{1-e^{-2t}}} \,   \int g(u)\,  \chi_{|e^{-t}u-x|/\sqrt{1-e^{-2t}}<s}\: \chi_{(1+|x|)|x-u|<\sigma}\,\,du.
  \end{equation}

Observe that it is not enough to prove that the $v(\rho)$ seminorm of $R_t^{s,\sigma}g(x)$, taken with respect to $t$, defines an operator of weak type (1,1) for each $s$ and $\sigma$. This is because $L^{1,\infty}$ is not a normed space. Instead we will estimate the variation of $R_t^{s,\sigma}g(x)$ for all $s$ and $\sigma$ in terms of one operator of weak type (1,1) (actually a small number of such operators and actually with a factor $s+1$, which is integrable against $de^{-s^2/2}/ds$).

A few times below, we will use  the simple inequalities
\begin{equation}\label{simple}
  y \le e^y - 1 \le 4y \qquad   \mathrm{for}    \qquad   0 \le y \le 2.
\end{equation}
The second inequality holds because the function $(e^y - 1)/y$ is increasing for these $y$, as seen from the power series.

In the sequel, we fix an $x \in \widetilde{I}_j \cap \mathbb R_+$ and let $s>0$ and $1/2<\sigma < 1$, but we temporarily allow all $t>0$.
We will soon introduce many quantities which will depend on $x,\,s,\,t $  and sometimes $\sigma$; in order not to make the notation too heavy, we will systematically omit indicating the dependence on $x$.

Since the inequality  $|e^{-t}u-x|/\sqrt{1-e^{-2t}}<s$ can be rewritten as
   $ |u-e^t x|< s\,\sqrt{e^{2t}-1},$
the integration in \eqref{Rt} is taken over the interval
\begin{equation*}
   J_t(s,\sigma) = \left\{u \in \mathbb{R}: |u-e^t x|< s\,\sqrt{e^{2t}-1}\qquad \mathrm{and} \qquad |u-x| < \frac \sigma {1+x} \right\}.
\end{equation*}

Observe that $J_t(s,\sigma)$ is nonempty precisely when
\begin{equation} \label{nonzero}
  s\,\sqrt{e^{2t}-1} + \frac\sigma {1+x}  > e^t x -x,
\end{equation}
or equivalently
$Q_{s}(t) < \sigma/(1+x)$,
where
\begin{equation}\label{defq}
Q_{s}(t) =   x(e^t-1) - s\,\sqrt{e^{2t}-1}\,.
\end{equation}
An instance of this function is plotted in Figure 1.

\vskip1cm
\begin{tikzpicture}
\begin{axis}
[   axis lines = left,
    xmin = -0.3, xmax = 1.5,
    ymin = -1, ymax = 2.0,
    xtick distance = 3,
    ytick distance = 3,
 xlabel = $t$,
 ylabel = {$Q_s(t)$},
         axis x line=middle, axis y line=middle]
         \draw[-] (0.15,-0.05) -- (0.15,0.05);
         \node at (0.2, 0.2) {\color{black}{\scriptsize{$\tilde t(s)$}}};
         \draw[-] (-0.05,0.25) -- (0.05 ,0.25);
          \node at (-0.15, 0.3) {\scriptsize{$\frac{\sigma}{1+x}$}};
           \draw[-] (0.7,-0.05) -- (0.7,0.05);
        \node at (0.8, -0.2) {\color{black}{\scriptsize{$t_1(s,\sigma)$}}};
           \draw[-] (0.52,-0.05) -- (0.52,0.05);
  \node at (0.52, 0.2) {\color{black}{\scriptsize{$ t_0(s)$}}};
  \addplot[
        domain = 0:1.5,
        samples = 200,
        smooth,
        thick
    ]
      {2*( exp(\x)-1)-sqrt(exp(2*\x)-1)};
\end{axis}
  \end{tikzpicture}

  {\bf{Figure 1.}} {Graph of $Q_s(t)$ with $x=2$, $s=1$. Here $\sigma = 3/4$.}


\bigskip
\vskip0.5cm

Since $Q'_{s}(t) = x e^t - se^{2t}/\sqrt{e^{2t}-1}  $, one finds by squaring each of these two terms that $Q'_{s}(t) > 0$ if and only if $x>s$ and $t > \widetilde t(s)$, where $\widetilde t(s) >0$ is determined by $e^{2\widetilde t(s)} = x^2/(x^2 - s^2)$. If   $x \le s$, we set   $\widetilde t(s) = +\infty$. It follows that $Q_{s}(t)$ is strictly
\begin{equation}\label{Qmon}
    \left\{ \begin{array}{ll}
       \mbox{decreasing in}\; \;&0 < t < \widetilde t(s) \\
      \mbox{increasing in}\;\; &\widetilde t(s) < t < +\infty.
    \end{array}
    \right.
\end{equation}

Further, $Q_{s}(0) = 0$, and if  $x>s$, then  $Q_{s}(t) \to +\infty$ as   $t \to +\infty$.
We conclude that there exists a   $t_1(s,\sigma) \in (0, +\infty]$ such that
\begin{equation}\label{nonempty}
J_t(s,\sigma) \ne \emptyset \quad \Leftrightarrow \quad Q_{s}(t) < \sigma/(1+x)  \quad \Leftrightarrow  \quad  0 < t < t_1(s,\sigma).
\end{equation}
Moreover,  $t_1(s, \sigma) < +\infty$ if and only if   $x > s$.

For later use, we make a similar observation regarding the inequality  $Q_{s}(t) < 0$. There exists a
$t_0(s) \in \left(\widetilde t(s), t_1(s,\sigma)\right) \cup \{+\infty\}$, finite if and only if  $x > s$,   such that
\begin{equation}\label{Qneg}
 Q_{s}(t) < 0  \quad \Leftrightarrow \quad 0 < t < t_0(s).
\end{equation}
(Actually, $t_0(s)$  is given by $e^{t_0(s)} =  \frac{x^2+s^2}{x^2-s^2}$
if $x>s$.)

 We let
\begin{equation}\label{defT}
T =   T(s,\sigma) := 1\wedge t_1(s,\sigma) = \sup \{t \in (0, 1]:\: Q_{s}(t) < \sigma/(1+x) \}.
\end{equation}
Then $ T(s,\sigma)  \in (0, 1]$, and
from now on we consider only $0 < t \le 1$.

  Notice that $ T(s,\sigma) < 1$ if and only if  $Q_{s}(1) > \sigma/(1+x)$.
 Further,
  \begin{equation} \label{TT}
      \{t \in (0, 1]:\: J_t(s,\sigma) \ne \emptyset \}
      =
    \left\{ \begin{array}{ll}
      (0,T(s,\sigma)) & \mbox{if $Q_{s}(1) \ge \sigma/(1+x)$}\\
      (0, T(s,\sigma)] = (0,1] & \mbox{otherwise.}
    \end{array}
    \right.
\end{equation}

  In the first case here, $J_{T(s,\sigma)}(s,\sigma) = \emptyset$ and
  $R_{T(s,\sigma)}^{s,\sigma}g(x) = 0$.  We observe that  $R_t^{s,\sigma}g(x)$ is in all cases defined and continuous as a function of $t$ for $0<t \le T(s,\sigma)$.

Next, we deduce a bound for $T(s,\sigma)$. Since  $T(s,\sigma) \le t_1(s,\sigma)$, any $t < T(s,\sigma)$ satisfies \eqref{nonzero}. Using first  \eqref{simple} and  then   \eqref{nonzero}
             multiplied by $x$ together with \eqref{simple}, and finally
        the inequality between the geometric and arithmetic means, we get
 \begin{equation*}
    x^2\,t \le x^2\,(e^{t}-1) < sx\,\sqrt{8t} + \sigma\,\frac {x} {1+x} \le \frac{x^2t}2 + 4s^2 +1.
   \end{equation*}
    Hence,
  \begin{equation} \label{T}
x^2\, T(s,\sigma) \le 8(s^2 +1).
\end{equation}

When  $J_t(s,\sigma)$ is nonempty, we write its endpoints as
\begin{equation*}
J_t(s,\sigma) = (k_t^-(s,\sigma), k_t^+(s,\sigma)),
\end{equation*}
and they are
\begin{equation}\label{k+}
  k_t^+(s,\sigma) =  \left( xe^t + s\,\sqrt{e^{2t}-1} \right)
     \wedge \left( x + \frac{\sigma}{1+x}\right)
\end{equation}
and
\begin{equation}\label{k-}
  k_t^-(s,\sigma) =  \left( xe^t - s\,\sqrt{e^{2t}-1} \right)
     \vee \left( x - \frac{\sigma}{1+x}\right).
\end{equation}

From the last expression,  it follows  that

\begin{equation}\label{k-<x}
k_t^-(s,\sigma) < x  \quad \Leftrightarrow     \quad   Q_{s}(t) < 0    \quad
\Leftrightarrow    \quad       t < t_0(s);
\end{equation}
see \eqref{Qneg}.


\vskip3pt



The next step in the proof of  Proposition \ref{prp} will be to
 apply the following theorem, obtained in the discrete setting in \cite{Jones}. It can easily be transferred to the setting of $\Bbb R$, see \cite[proof of Lemma 2.1]{Campbell}.  Define the one-sided mean values of a function $\phi \in L^1(\Bbb R)$ by
 \begin{equation*}
    M_\tau^+\,\phi(x) = \frac 1 \tau \, \int_{x}^{x+\tau} \phi(u)\,du, \qquad x \in \Bbb R,\;\; \tau >0,
  \end{equation*}
  and $M_\tau^-\,\phi(x)$ similarly using the interval $(x-\tau,x)$.

\begin{theorem} \label{jones}
 (\cite[Theorem 3.6]{Jones}) For $2<\rho<\infty$, the operator that maps    $f \in L^1(\Bbb R)$ to the function
   \begin{equation*}
   \| M_\tau^+\,\phi(x) \|_{v(\rho),\Bbb R_+}, \qquad x \in \mathbb R,
   \end{equation*}
   is of weak type (1,1) with respect to Lebesgue measure in $\Bbb R$.
   Here the variation is taken in the variable $\tau$.
\end{theorem}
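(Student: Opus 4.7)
The plan is to use the standard long-plus-short variation decomposition. Given any finite increasing sequence $(\tau_i)$ in $\mathbb R_+$, insert the dyadic endpoints $2^n$ that fall in each interval $[\tau_{i-1},\tau_i]$ and apply subadditivity of $\|\cdot\|_{v(\rho)}$ together with the monotonicity of the seminorm in $\rho$. This yields, pointwise in $x$,
\[
\|M_\tau^+\phi(x)\|_{v(\rho),\mathbb R_+} \lesssim V_{\mathrm{long}}\phi(x) + V_{\mathrm{short}}\phi(x),
\]
where $V_{\mathrm{long}}\phi(x)$ is the $\rho$-variation of the dyadic sequence $(M_{2^n}^+\phi(x))_{n\in\mathbb Z}$ and
\[
V_{\mathrm{short}}\phi(x)=\Bigl(\sum_{n\in\mathbb Z}\|M_\tau^+\phi(x)\|_{v(2),[2^n,2^{n+1}]}^2\Bigr)^{1/2}.
\]
It then suffices to prove that each of these two operators is of weak type $(1,1)$.

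For $V_{\mathrm{long}}$, I would compare $M_{2^n}^+\phi(x)$ to the conditional expectation $E_n\phi(x):=\mathbb E[\phi\mid\mathcal F_n](x)$, where $\mathcal F_n$ is the $\sigma$-algebra generated by the intervals $\{[k2^n,(k+1)2^n):k\in\mathbb Z\}$. Lépingle's classical martingale variational inequality, together with its weak $(1,1)$ refinement due to Pisier--Xu (and exactly the ingredient cited in \cite{Lepingle,MSZK} at the beginning of the paper), bounds the $v(\rho)$ seminorm of the martingale $(E_n\phi)_{n\in\mathbb Z}$ in $L^{1,\infty}$ whenever $\rho>2$. Since $M_{2^n}^+\phi(x)$ is \emph{not} measurable with respect to $\mathcal F_n$, the comparison to the martingale cannot be purely pointwise; one averages over $O(1)$ translated dyadic grids, in the spirit of Fefferman--Stein, and estimates the residual by the Hardy--Littlewood maximal function, which is of weak type $(1,1)$.

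For $V_{\mathrm{short}}$, on each dyadic block $\tau\in[2^n,2^{n+1}]$ the map $\tau\mapsto M_\tau^+\phi(x)$ is absolutely continuous with
\[
\frac{d}{d\tau}M_\tau^+\phi(x)=\frac{\phi(x+\tau)-M_\tau^+\phi(x)}{\tau}.
\]
A Cauchy--Schwarz upgrade of Lemma~\ref{var}(a) gives
\[
\|M_\tau^+\phi(x)\|_{v(2),[2^n,2^{n+1}]}^2 \le 2^n\int_{2^n}^{2^{n+1}}\left|\frac{d}{d\tau} M_\tau^+\phi(x)\right|^2 d\tau,
\]
so the task reduces to a weak $(1,1)$ bound for the square function $S\phi(x)$ obtained by summing the right-hand side in $n$. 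The $L^2$ bound for $S\phi$ is a direct Plancherel computation, since the underlying Fourier multiplier is of standard Littlewood--Paley type, and the extension to weak $(1,1)$ is obtained through a Calderón--Zygmund decomposition at level $\alpha$, exploiting the mean-zero cancellation of the bad atoms on scales below the cube side length and $L^2$ off-diagonal decay on scales above.

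The main obstacle is the long-variation step: Lépingle's inequality is a sharp endpoint result at $\rho=2$ and rests on a genuine martingale stopping-time argument, while the passage from the martingale $E_n\phi$ to the average $M_{2^n}^+\phi$ is equally delicate, because a naive pointwise comparison destroys the variation structure and one is forced into the shifted-grid averaging device. By contrast, the short variation is a comparatively standard Littlewood--Paley exercise once the derivative identity above is in hand.
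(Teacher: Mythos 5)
A preliminary remark on the comparison: the paper does not prove this theorem at all — it is quoted as \cite[Theorem 3.6]{Jones}, with the transfer from the discrete setting to $\mathbb R$ attributed to the proof of Lemma 2.1 in \cite{Campbell}. So your proposal has to be measured against the literature argument, which is indeed the long-plus-short variation scheme you describe. Your reduction $\|M_\tau^+\phi(x)\|_{v(\rho),\mathbb R_+}\lesssim V_{\mathrm{long}}\phi(x)+V_{\mathrm{short}}\phi(x)$ is correct (using $v(\rho)\le v(2)$ on blocks and $\ell^2\subset\ell^\rho$), and your treatment of the short variation — the derivative identity, the Cauchy--Schwarz upgrade of Lemma \ref{var}(a) on each dyadic block, the $L^2$ bound by Plancherel, and weak type $(1,1)$ by a vector-valued Calder\'on--Zygmund decomposition — follows the standard route and is sound in outline.

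The genuine gap is in the long-variation step. After invoking L\'epingle's inequality in its weak type $(1,1)$ form (\cite{Lepingle}, \cite{MSZK}) for the dyadic martingale $(E_n\phi)_n$, you must control the discrepancy between $(M_{2^n}^+\phi(x))_n$ and $(E_n\phi(x))_n$ in a way compatible with the variation seminorm; the natural bound is $\bigl\|(M_{2^n}^+\phi(x)-E_n\phi(x))_n\bigr\|_{v(\rho)}\le 2\bigl(\sum_{n}|M_{2^n}^+\phi(x)-E_n\phi(x)|^2\bigr)^{1/2}$, so what is needed is a weak type $(1,1)$ estimate for this \emph{square function over all scales}. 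Your proposal replaces this by averaging over $O(1)$ shifted dyadic grids and ``estimating the residual by the Hardy--Littlewood maximal function''. That step fails: the pointwise bound $|M_{2^n}^+\phi(x)-E_n\phi(x)|\lesssim \mathcal M\phi(x)$ is true for each fixed $n$, but the quantity to be controlled is an $\ell^2$ (or at best $\ell^\rho$) sum over all $n\in\mathbb Z$, and a maximal-function bound gives no decay in $n$, so the sum simply diverges. The decay across scales is precisely the cancellation encoded in the square function $\bigl(\sum_n|M_{2^n}^+\phi-E_n\phi|^2\bigr)^{1/2}$, whose weak type $(1,1)$ is proved in \cite{Jones} (and, for the analogous Hilbert-transform statement, in \cite{Campbell}) by viewing it as a vector-valued Calder\'on--Zygmund operator: $L^2$ boundedness by orthogonality/Fourier transform, then a Calder\'on--Zygmund decomposition exploiting the mean-zero property of the bad part against the regularity of the kernel differences — the same mechanism you correctly use for the short variation. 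A secondary point is that, since $M_{2^n}^+$ is one-sided and anchored at $x$, the comparison martingale must be chosen with some care (this is where shifted grids legitimately enter), but the essential missing ingredient is the square-function estimate, not the grid choice; with that supplied, the rest of your outline goes through.
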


This clearly holds also with $M_\tau^+$ replaced by
$M_\tau^-$.

Thus we need to rewrite  $   R_t^{s,\sigma}g(x) = (1-e^{-2t})^{-1/2} \,   \int_{J_t(s,\sigma)} g(u)\,  du$ in terms of mean values of $g$ in intervals with one endpoint at $x$.

With $0<t\le T(s,\sigma)$, we define
$J_t^+(s,\sigma) = (x,k_t^+(s,\sigma))$, which is an  interval  of length
\begin{equation}\label{lengthj+}
|J_t^+(s,\sigma)| = k_t^+(s,\sigma) - x =\left( (e^t-1)x + s\,\sqrt{e^{2t}-1} \right) \wedge \frac\sigma{1+x}\,.
\end{equation}
We further define $J_t^-(s,\sigma)  =   (k_t^-(s,\sigma),x)$, considered as an oriented interval in the sense that if $x < k_t^-(s,\sigma)$,  an integral over ${J_t^-(s,\sigma)}$ is interpreted as
minus the integral over
$(x, {k_t^-(s,\sigma)})$.
Its length is
\begin{align}\label{lengthj-}
|J_t^-(s,\sigma)|
 = & \,|k_t^-(s,\sigma) - x|\\ =&\,
     \left| x(e^t-1) - s\,\sqrt{e^{2t}-1} \right|
     \wedge \frac{\sigma}{1+x} =
   |Q_{s}(t)|  \wedge   \frac{\sigma}{1+x}\,,
   \end{align}
   as follows from the expression for $k_t^-(s,\sigma)$ if one separates the cases when the equivalent statements of
    \eqref{k-<x} are true or false.

We now have for any $0<t\le T(s,\sigma)$
\begin{align}\label{means}
   R_t^{s,\sigma}g(x) = &\, \frac 1{\sqrt{1-e^{-2t}}} \, \int_{J_t^+(s,\sigma)}g(u)\,  du +
   \frac 1{\sqrt{1-e^{-2t}}} \, \int_{J_t^-(s,\sigma)} g(u)\,  du   \notag\\
 = & \, \frac{|J_t^+(s,\sigma)|}{\sqrt{1-e^{-2t}}}\, M_{|J_t^+(s,\sigma)|}^+ \, g(x) \pm
  \frac{|J_t^-(s,\sigma)|}{\sqrt{1-e^{-2t}}}\, M_{|J_t^-(s,\sigma)|}^\mp \,g(x),
\end{align}
where one should read the upper signs in $\pm$ and $\mp$ if $ k_t^-(s,\sigma) < x$ and otherwise the lower signs.
Notice that the two terms here cancel for $t = T(s,\sigma)$ if $J_{T(s,\sigma)}(s,\sigma) = \emptyset$,
since then $Q_s(T(s,\sigma)) = \sigma/(1+x)$ and so
 $k_t^+(s,\sigma) = k_t^-(s,\sigma) = x + \sigma/(1+x)$.

We will next
consider the variation in  $0<t\le T(s,\sigma)$ of the two mean values in \eqref{means}, and start with $M_{|J_t^+(s,\sigma)|}^+ \,g(x)$.  Since $|J_t^+(s,\sigma)|$ is a nondecreasing, continuous function of $t$ in this interval, we can reparametrize
 $M_{|J_t^+(s,\sigma)|}^+ \,g(x),\;0<t\le T(s,\sigma) $,  as $M_\tau^+ \,g(x)$
  with $0 < \tau \le \tau_0$ for some $\tau_0 = \tau_0(s,\sigma)$.
  This reparametrization does not change the variation, so that
   \begin{equation*}
 \|M_{|J_t^+(s,\sigma)|}^+ \,g(x) \|_{v(\rho), (0,T]} =
   \|M_\tau^+\, g(x)\|_{v(\rho), (0,\tau_0]}
    \end{equation*}
 for each $s$ and $\sigma$, with the variations taken with respect to $t$ and $\tau$, respectively.
Extending the range of $\tau$ here, we conclude that
 \begin{equation} \label{weak+}
 \|M_{|J_t^+(s,\sigma)|}^+ \,g(x)\|_{v(\rho), (0, T]} \le
   \|M_\tau^+\, g(x)\|_{v(\rho), \Bbb R_+}.
    \end{equation}
    The right-hand side here is independent of  $s$ and $\sigma$, and   Theorem \ref{jones} applies to it.

To deal with  $M_{|J_t^-(s,\sigma)|}^\mp \,g(x)$,
we first consider the case when $t_0(s) < T(s,\sigma)$.
 At the point $t = t_0(s)$, the difference
 $k_t^-(s,\sigma) - x$  changes sign, and $|J_{t_0(s)}^-(s,\sigma)| = 0$.
Observe that if $x$ is a Lebesgue point for $g$, then  $M_{|J_t^-(s,\sigma)|}^+ \,g(x)$
 tends to $g(x)$ as  $t \downarrow t_0(s)$ and similarly for
 $M_{|J_t^-(s,\sigma)|}^- \,g(x)$ as $t \uparrow t_0(s)$. Then the second factor in the last term of \eqref{means} will be continuous in $t$ also at $t = t_0(s)$, if interpreted as  $g(x)$  at this point. This last term, with the $\pm$ sign, is also continuous, because the first factor is continuous and vanishes at  $t = t_0(s)$. We will consider the variation of    $M_{|J_t^-(s,\sigma)|}^\mp \,g(x)$
  separately in the subintervals $0 < t \le  t_0(s)$ and  $t_0(s) \le t \le T(s,\sigma)$.

 To obtain subintervals where the length $|J_t^-(s,\sigma)|$ is monotone, we invoke \eqref{Qmon} and split $(0,  t_0(s)]$ further into
 $\left(0, \widetilde t(s)\right]$  and  $\left(\widetilde t(s),  t_0(s)\right]$.
We can now  reparametrize as before in each of the three subintervals of $(0, T(s,\sigma)]$ obtained. The only little difference is that $\tau$ may now run in an interval that stays away from 0, but we can still extend its range to $\Bbb R_+$. We conclude that for every Lebesgue point $x \in \widetilde{I}_j \cap \mathbb R_+$, thus for a.a.~$x \in \widetilde{I}_j \cap \mathbb R_+$,
\begin{equation} \label{weak-}
 \|M_{|J_t^-(s,\sigma)|}^\mp \,g(x)\|_{v(\rho), (0, T]} \le
  2\, \|M_\tau^-\, g(x)\|_{v(\rho), \Bbb R_+} + \,\|M_\tau^+\, g(x)\|_{v(\rho), \Bbb R_+};
    \end{equation}
    here and below  $T = T(s,\sigma)$.
  This ends the case  $t_0(s) < T(s,\sigma)$.

  The remaining case  $t_0(s) \ge T(s,\sigma)$ is slightly easier. Then $T(s,\sigma) = 1$, and $k_t^-(s,\sigma) < x$ for $t < 1$.
  If  $\widetilde t(s) < 1$, we split  $(0,  1]$ into
    $\left(0, \widetilde t(s)\right]$  and
      $\left[\widetilde t(s), 1\right]$; otherwise no splitting is necessary.
       When  $t_0(s) = 1$, we again need to assume that  $x$ is a  Lebesgue point.
       It follows that \eqref{weak-} holds also in this case.

Since we are going to apply Lemma \ref{var_prod} to the products in \eqref{means}, we  observe that the $L^\infty$ norms          
 of the means in \eqref{means} are controlled by standard maximal operators of $g$.
 More precisely,
\begin{equation}\label{maxfcn+}
  \|M_{|J_t^+(s,\sigma)|}^+ \,g(x)\|_{L^\infty} \le \mathcal M^+ g(x),
\end{equation}
\begin{equation}\label{maxfcn-}
  \|M_{|J_t^-(s,\sigma)|}^+ \,g(x)\|_{L^\infty}  \le \mathcal M^+ g(x),
\end{equation}
and
\begin{equation}\label{maxfcn-}
  \|M_{|J_t^-(s,\sigma)|}^- \,g(x)\|_{L^\infty}  \le \mathcal M^- g(x),
\end{equation}
where the $L^\infty$ norms are taken with respect to $t$, and
\begin{equation*}
   \mathcal M^\pm g(x) = \sup_{\tau>0} M_\tau^\pm |g|(x).
\end{equation*}

It remains to deal with the two factors in front of  the mean values in  \eqref{means}. They are
\begin{align}
F_\pm :=  \frac{|J_t^\pm(s,\sigma)|}{\sqrt{1-e^{-2t}}} = &\label{factors1}
\,\frac{\left|x(e^t - 1)\pm s\sqrt{e^{2t}-1}\right| \wedge (\sigma/(1+x))}{\sqrt{1-e^{-2t}}}\\ = &
\,\left|\frac{x(e^t - 1)}{\sqrt{1-e^{-2t}}} \pm se^t\right|
  \wedge \frac{\sigma}{(1+x)\sqrt{1-e^{-2t}}},    \label{factors2}
\end{align}
where we used  \eqref{lengthj+} and  \eqref{lengthj-}.

\begin{lemma} \label{Fpm}
  For  $\rho \ge 1$, $s>0$ and $\sigma \in (1/2,1)$,
  \begin{equation*}
  \|F_\pm   \|_{L^\infty(0,T]} \lesssim s+1  \qquad \mathrm{and} \qquad  \|F_\pm   \|_{v(\rho),(0,T]} \lesssim s+1,
  \end{equation*}
  where the norm and the seminorm are taken in the variable $t$.
\end{lemma}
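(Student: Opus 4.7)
My plan is to treat the two bounds in parallel, reducing both to a count of monotone subintervals. Since $\|\cdot\|_{v(\rho)}$ is decreasing in $\rho$ (noted at the start of Section~\ref{prelim}), it suffices to handle $\rho = 1$; then by Lemma~\ref{var}(b) and the subadditivity of the variation over subintervals, the task reduces to showing that $F_\pm$ admits, uniformly in $s,\sigma,x,j$, at most an absolute bounded number of maximal monotone subintervals of $(0,T(s,\sigma)]$, each of $L^\infty$ norm $\lesssim s+1$.

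For the $L^\infty$ estimate I would start from \eqref{factors2}, drop the minimum, and use the factorisation
\begin{equation*}
\frac{e^t-1}{\sqrt{1-e^{-2t}}} = e^t\sqrt{\frac{1-e^{-t}}{1+e^{-t}}},
\end{equation*}
which shows that this quantity is $\lesssim \sqrt{t}$ on $(0,1]$ (via $1-e^{-t}\le t$) and strictly increasing in $t$ (each of its three factors on the right is). The first property combined with \eqref{T} gives $x(e^t-1)/\sqrt{1-e^{-2t}} \lesssim x\sqrt{T(s,\sigma)} \lesssim s+1$ on $(0,T]$, and together with $se^t \lesssim s$ this yields $\|F_\pm\|_\infty \lesssim s+1$.

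The strict-increase property settles $F_+$: by \eqref{factors2} it is the minimum of the increasing function $x(e^t-1)/\sqrt{1-e^{-2t}} + se^t$ and the decreasing function $\sigma/((1+x)\sqrt{1-e^{-2t}})$, hence unimodal with at most two monotone pieces. For $F_-$ I would work from \eqref{factors1} with
\begin{equation*}
F_-^2 = \frac{Q_s(t)^2 \wedge \sigma^2/(1+x)^2}{1-e^{-2t}},
\end{equation*}
which is decreasing on the capping set; on its complement the derivative equals $2Q_s M/(1-e^{-2t})^2$ with $M = Q_s'(1-e^{-2t}) - Q_s e^{-2t}$. The factor $Q_s$ vanishes only at $t=0$ and $t=t_0(s)$ by \eqref{Qneg}, and the equation $M=0$ becomes, after substituting $v=e^t$ and squaring, the polynomial identity
\begin{equation*}
x^2(v^2+v-1)^2 = s^2(v-1)(v+1)^3
\end{equation*}
of degree four in $v$, with at most four real roots. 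Combined with the boundedly many crossings of the cap level (controlled by \eqref{Qmon}), this gives $F_-$ at most $O(1)$ maximal monotone pieces on $(0,T]$, completing the argument.

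The main obstacle is this last analysis. The inner expression in \eqref{factors2} is $|x(e^t-1)/\sqrt{1-e^{-2t}} - se^t|$, the absolute value of a difference of two strictly increasing functions, so neither monotonicity nor unimodality is manifest; Lemma~\ref{var_prod} cannot be applied to $F_- = |J_t^-|\cdot(1-e^{-2t})^{-1/2}$ because the second factor is unbounded near $t=0$, and the tempting splitting $\|F_-\|_{v(1)} \le \|A_-\|_{v(1)} + \|B\|_{v(1)}$ would also fail since $\|B\|_{v(1),(0,T]} = +\infty$. The polynomial-degree bound for the zeros of $M$ is therefore essential. A cleaner alternative would be to prove directly that $F_-^2$ is monotone on $(0,t_0(s)]$ and on $[t_0(s),T]$, reducing the count to two, but this requires showing $M\ge 0$ on $(0,t_0(s))$ uniformly in $s$ and $x$, which is delicate across all regimes.
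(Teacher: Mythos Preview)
Your proposal is correct and follows essentially the same strategy as the paper: bound $\|F_\pm\|_\infty$ via $x\sqrt t\lesssim x\sqrt{T(s,\sigma)}\lesssim s+1$ from \eqref{T}, and bound the variation by splitting $(0,T]$ into $O(1)$ monotone pieces and invoking Lemma~\ref{var}(b), with the piece count coming from a polynomial-degree argument. Your equation $x^2(v^2+v-1)^2=s^2(v-1)(v+1)^3$ for the zeros of $M$ is exactly the paper's equation $P_-'=0$ in disguise, since $P_-=Q_s/\sqrt{1-e^{-2t}}$ and hence $P_-'=M/(1-e^{-2t})^{3/2}$.

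One genuine (small) improvement over the paper: for $F_+$ you observe directly that it is the minimum of an increasing function and a decreasing one, hence unimodal with at most two monotone pieces. The paper instead runs the same polynomial-zero analysis for $P_+$ as for $P_-$; your shortcut avoids that computation entirely on the $+$ side.
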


\begin{proof}
We have from \eqref{factors2}
\begin{equation}\label{faktor+}
 |F_\pm| \le \frac{xe^t(e^t - 1)}{\sqrt{e^{2t}-1}} +se^t
  \le \frac{4xe^t\,t}{\sqrt{2t}}\, + es \lesssim   x\sqrt{t} + s  \lesssim s+1,
\end{equation}
where the second inequality comes from \eqref{simple} and
the last step uses \eqref{T}. The first inequality of the lemma is verified.

For the second inequality, we will apply Lemma \ref{var}(b).
The factors $F_\pm$ are not always monotone in $t$, but we will split  the interval $(0, T(s,\sigma)]$ into  subintervals where they are monotone.         
The splitting may depend on $s,\,\sigma$ and $x$, but the number of subintervals
will be no larger than $C$.
 This will be done in several steps.

To begin with, we consider only  $F_-$.
We split  $(0, T(s,\sigma)]$ at $t = \widetilde t(s)$ if  $\widetilde t(s) < T(s,\sigma)$, and also at  $t = t_0(s)$ if  $t_0(s) < T(s,\sigma)$.

The splitting then continues, and now we take both  $F_+$ and  $F_-$ into account.  The next split depends on which of the quantities  $\left|x(e^t - 1)\pm s\sqrt{e^{2t}-1}\right|$  and $\sigma/(1+x)$, occurring in the minimum in \eqref{factors1}, is the smaller. Since
 $\left|x(e^t - 1)\pm s\sqrt{e^{2t}-1}\right|$  is monotone in each subinterval obtained so far, this
  may give one split for  $F_+$  and another for  $F_-$.  Next, observe that in the subintervals where
  $\sigma/(1+x)$ is the smaller, we see from  \eqref{factors2} that  $F_\pm = \sigma/\left((1+x)\sqrt{1-e^{-2t}}\,\right)$, which is monotone.
  It only remains to consider the case when  $\left|x(e^t - 1)\pm s\sqrt{e^{2t}-1}\right|$  is the smaller quantity in  \eqref{factors1}. Then  \eqref{factors2} shows that
   $F_\pm = P_\pm$, where
  \begin{equation*}
 P_\pm  =  \frac{x(e^t - 1)}{\sqrt{1-e^{-2t}}} \pm se^t.
  \end{equation*}
The derivative of $P_\pm$ is seen to vanish precisely when
 \begin{equation*}
  x\, \frac{e^t  -2e^{-t} + e^{-2t}}{( 1 -e^{-2t} )^{3/2}}   = \mp  se^t.
  \end{equation*}
In this equation, we multiply by the denominator and square both sides. After multiplication by a suitable power of $e^t$, the result will be a polynomial equation in $e^t$,  with only a bounded number of solutions. Thus we can split our subintervals further, into intervals where $P_\pm$ is monotone.

This ends the splitting, and Lemma \ref{var}(b) implies the second inequality of  Lemma~\ref{Fpm}.
\end{proof}

We can now finish the proof of  Proposition \ref{prp}.
Applying  Lemma  \ref{var_prod} to the two products in \eqref{means}, we get
\begin{align*}
 & \| R_t^{s,\sigma}g(x)\|_{v(\rho), (0, T]}  \\ & \le
  \|F_+   \|_{L^\infty(0,T]} \|M_{|J_t^+(s,\sigma)|}^+ \,g(x)\|_{v(\rho), (0, T]} +
  \|F_+   \|_{v(\rho),(0,T]}\, \|M_{|J_t^+(s,\sigma)|}^+ \,g(x)\|_{L^\infty(0,T]} \\ & +
  \|F_-   \|_{L^\infty(0,T]}\,
  \left( \|M_{|J_t^-(s,\sigma)|}^+ \,g(x)\|_{v(\rho), (0, T]}+  \|M_{|J_t^-(s,\sigma)|}^- \,g(x)\|_{v(\rho), (0, T]}\right)  \\ & +
  \|F_-   \|_{v(\rho),(0,T]}\, \left(\|M_{|J_t^-(s,\sigma)|}^+ \,g(x)\|_{L^\infty(0,T]} + \|M_{|J_t^-(s,\sigma)|}^- \,g(x)\|_{L^\infty(0,T]}\right).
\end{align*}
Using  Lemma \ref{Fpm} together with \eqref{weak+}, \eqref{maxfcn+}, \eqref{weak-}, and \eqref{maxfcn-}, we conclude
that for a.a. $x$
\begin{multline*}
  \| R_t^{s,\sigma}g(x)\|_{v(\rho), (0, T]} \\
  \lesssim \,  \,(1+s)  \,
     \big(
  \|M_\tau^+\, g(x)\|_{v(\rho), \Bbb R_+}
  + \|M_\tau^-\, g(x)\|_{v(\rho), \Bbb R_+}        
 +        \mathcal M^+ g(x) +    \mathcal M^- g(x)
  \big).
  \end{multline*}
 The four terms to the right here are independent of  $s$ and $\sigma$, and we can insert this estimate in \eqref{Ht} and integrate with respect to $s$ and $\sigma.$
The result will be
 \begin{multline*}
 \| \mathcal H_t^{\mathrm{loc}}  g(x)\|_{v(\rho), (0,1]} \lesssim
  \|M_\tau^+\, g(x)\|_{v(\rho), \Bbb R_+}
  + \|M_\tau^-\, g(x)\|_{v(\rho), \Bbb R_+}        
 +        \mathcal M^+ g(x) +    \mathcal M^- g(x)
  \end{multline*}
  for a.a. $x \in \widetilde{I}_j \cap \mathbb R_+$.
  In view of Theorem \ref{jones}, this shows that the operator given by $\| \mathcal H_t^{\mathrm{loc}}  g(x)\|_{v(\rho), (0,1]}$ is of weak type (1,1) as stated in  Proposition \ref{prp}. This ends the proofs of
  Proposition \ref{prp} and also that of Proposition \ref{locsmallt}.


  \vskip70pt



   \noindent APPENDIX.  Asymptotics of  $x_j$

\vskip4pt

\noindent Claim:
\begin{equation*}
  x_j = 2\,\sqrt{j} -1 + O\left(\frac{1}{\sqrt j}\right),  \qquad j \to +\infty.
\end{equation*}

To prove this, let $z_j  = 1 + x_j$ for $j = 1,2,\dots$. The recursion formula  says that
$z_{j+1} - z_j = 1/z_j +1/z_{j+1} $. We have
\begin{equation}\label{squardiff}
  z_{j+1}^2 - z_j^2 = (z_{j+1} + z_j)\left(\frac1{z_j} + \frac1{z_{j+1}}\right) =
2 + \frac{z_{j+1}}{z_{j}} + \frac{z_{j}}{z_{j+1}} > 2.
\end{equation}
             Writing $z_j^2$ as a telescoping sum, we obtain
\begin{equation*}
 z_j^2 = z_0^2 + \sum_{0}^{j-1} \left( z_{\nu+1}^2 - z_\nu^2 \right) \ge 1 +2j,
\end{equation*}
and so
$z_{j+1} - z_j \le 2/\sqrt{1+2j}$. We continue with the  sum in \eqref{squardiff}, getting
\begin{multline*}
  z_{j+1}^2 - z_j^2 = 2 + \frac{z_{j+1}-z_{j}}{z_{j}} + 1 + \frac{z_{j}-z_{j+1}}{z_{j+1}} + 1 =
  4 + (z_{j+1}-z_{j})\left(\frac1{z_j} - \frac1{z_{j+1}}\right)  \\ =
  4 +\frac{(z_{j+1}-z_{j})^2}{z_{j}z_{j+1}} = 4 +O\left(\frac1{(1+2j)^{2}}  \right).
\end{multline*}
Summing as above, we find
\begin{equation*}
 z_j^2 =   4j + O(1),
\end{equation*}
and so $z_j =   2\,\sqrt j + O(1/\sqrt j)$. This proves the claim.


\vskip80pt

\end{document}

\end{thebibliography}

\vskip15pt

\end{document}

test

F